\documentclass[11pt, a4paper]{amsart}

\usepackage{a4, a4wide}
\usepackage{amssymb}
\usepackage{amsmath}
\usepackage{amsthm}
\usepackage{amstext}
\usepackage{amscd}
\usepackage{latexsym}
\usepackage{graphics}
\usepackage{color}
\usepackage{enumitem}
\usepackage[all]{xy}
\usepackage[textsize=scriptsize]{todonotes}

\usepackage[colorlinks, pagebackref]{hyperref}
\hypersetup{
  colorlinks=true,
  citecolor=blue,
  linkcolor=blue,
  urlcolor=blue}

\makeatletter
\def\@tocline#1#2#3#4#5#6#7{\relax
  \ifnum #1>\c@tocdepth 
  \else
    \par \addpenalty\@secpenalty\addvspace{#2}%
    \begingroup \hyphenpenalty\@M
    \@ifempty{#4}{%
      \@tempdima\csname r@tocindent\number#1\endcsname\relax
    }{%
      \@tempdima#4\relax
    }%
    \parindent\z@ \leftskip#3\relax \advance\leftskip\@tempdima\relax
    \rightskip\@pnumwidth plus4em \parfillskip-\@pnumwidth
    #5\leavevmode\hskip-\@tempdima
      \ifcase #1
      \or\or \hskip 2em \or \hskip 2em \else \hskip 3em \fi%
      #6\nobreak\relax
    \dotfill\hbox to\@pnumwidth{\@tocpagenum{#7}}\par
    \nobreak
    \endgroup
  \fi}
\makeatother

\setcounter{tocdepth}{1}

\theoremstyle{plain}

\newtheorem{theorem}{Theorem}[section]

\newtheorem{lemma}[theorem]{Lemma}
\newtheorem{corollary}[theorem]{Corollary}
\newtheorem{proposition}[theorem]{Proposition}

\theoremstyle{definition}

\newtheorem{remark}[theorem]{Remark}

\newtheorem{definition}[theorem]{Definition}

\numberwithin{equation}{section}


\newcommand{\ilim}{\mathop{\varprojlim}\limits} 
\newcommand{\dlim}{\mathop{\varinjlim}\limits}  



\newcommand{\codim}{{\rm codim}}

\newcommand{\coker}{{\rm Coker}}

\newcommand{\im}{{\rm Im}}
\newcommand{\Spec}{{\rm Spec \,}}


\newcommand{\A}{{\mathbb A}}
\renewcommand{\P}{{\mathbb P}}

\newcommand{\Sing}{{\rm Sing}_*^{\mathbb A^1}}

\def\<{\langle}
\def\>{\rangle} 
\def\-{\overline} 
\def\~{\widetilde}
\def\^{\widehat}

\def\@{\mathcal}
\def\!{\mathscr}
\def\#{\mathbb}
\def\_{\underline}

\setlength{\marginparwidth}{2cm}

\input{xy}
\xyoption{all}

\begin{document}

\title[Strong $\A^1$-invariance]{Strong $\A^1$-invariance of $\A^1$-connected components of reductive algebraic groups}

\author{Chetan Balwe}
\address{Department of Mathematical Sciences, Indian Institute of Science Education and Research Mohali, Knowledge City, Sector-81, Mohali 140306, India.}
\email{cbalwe@iisermohali.ac.in}

\author{Amit Hogadi}
\address{Department of Mathematical Sciences, Indian Institute of Science Education and Research Pune, Dr. Homi Bhabha Road, Pashan, Pune 411008, India.}
\email{amit@iiserpune.ac.in}

\author{Anand Sawant}
\address{School of Mathematics, Tata Institute of Fundamental Research, Homi Bhabha Road, Colaba, Mumbai 400005, India.}
\email{asawant@math.tifr.res.in}
\date{}
\thanks{The authors acknowledge the support of India DST-DFG Project on Motivic Algebraic Topology DST/IBCD/GERMANY/DFG/2021/1, SERB Start-up Research Grant SRG/2020/000237 and the Department of Atomic Energy, Government of India, under project no. 12-R\&D-TFR-5.01-0500.}

\begin{abstract} 
We show that the sheaf of $\A^1$-connected components of a reductive algebraic group over a perfect field is strongly $\A^1$-invariant.  As a consequence, torsors under such groups give rise to $\A^1$-fiber sequences.  We also show that sections of $\A^1$-connected components of anisotropic, semisimple, simply connected algebraic groups over an arbitrary field agree with their $R$-equivalence classes, thereby removing the perfectness assumption in the previously known results about the characterization of isotropy in terms of affine homotopy invariance of Nisnevich locally trivial torsors.
\end{abstract}

\maketitle
\tableofcontents

\setlength{\parskip}{4pt plus1pt minus1pt}
\raggedbottom

\section{Introduction}
\label{section introduction}

Principal bundles (or in other words, torsors) under a group are among the most typical examples of fibrations in topology.  The concept of a fibration is extremely useful, since a fibration gives rise to a long exact sequence consisting of homotopy groups of the total space, the base space and the fiber.  Motivic homotopy theory developed by Morel and Voevodsky \cite{Morel-Voevodsky} provides a framework to import techniques from topology into algebraic geometry.  The first application of motivic fiber sequences is the theory of Euler class developed by Morel in \cite{Morel-book}. Computations using motivic fiber sequences have later been successfully used in the works of Asok and Fasel \cite{Asok-Fasel-3folds, Asok-Fasel-4folds} to address classical questions about splitting properties of vector bundles on smooth affine varieties using the $\A^1$-homotopical interpretation of these questions.  Unlike in classical topology, torsors under sheaves of groups do not automatically give rise to fiber sequences in $\A^1$-homotopy theory, the reason behind which can be attributed to the peculiar behaviour of the sheaf of $\A^1$-connected components.  The set of connected components of a space in classical topology is a discrete space.  The analogous statement in $\A^1$-homotopy theory states that the sheaf of $\A^1$-connected components of a space is $\A^1$-invariant.  This statement, conjectured to be true by Morel \cite{Morel-book}, is known to hold in many cases, but has been recently disproved in its generality by Ayoub \cite{Ayoub-counterexample}.

Let us fix a base field $k$ and denote by $\@H(k)$ the unstable motivic homotopy category over $k$, the objects of which are simplicial Nisnevich sheaves of sets on the category $Sm_k$ of smooth schemes over $k$.  We refer the reader to Sections \ref{section birationality} and \ref{section preliminaries alg groups} for relevant definitions not mentioned in the introduction.  Let $G$ be a Nisnevich sheaf of groups on $Sm_k$ and let $\@P \to \@X$ be a Nisnevich locally trivial $G$-torsor in $\@H(k)$.  This means that $\@P$ has a right $G$-action, the morphism $\@P \to \@X$ is equivariant for the trivial right $G$-action on $\@X$ and there is an isomorphism $G \times \@P \stackrel{\sim}{\to} \@P \times_{\@X} \@P$ of simplicial Nisnevich sheaves.  By a result of Choudhury \cite{Choudhury}, it is known that $\pi_0^{\A^1}(G)$ is $\A^1$-invariant.  Morel showed in \cite[Theorem 6.50]{Morel-book} that if the sheaf $\pi_0^{\A^1}(G)$ of $\A^1$-connected components of $G$ is strongly $\A^1$-invariant, then a $G$-torsor $\@P \to \@X$ yields an $\A^1$-fiber sequence
\[
\mathcal P \to \mathcal X \to BG. 
\]
We will be interested in the case when $G$ is represented by an algebraic group over $k$.  Strong $\A^1$-invariance of $\pi_0^{\A^1}(G)$ is known when $G$ is isotropic, semisimple and simply connected over a field due to the work of Asok, Hoyois and Wendt \cite{AHW2, AHW3}.  The main result of this paper is to generalize this result to all reductive algebraic groups, at least when the base field is perfect.

\begin{theorem}[Theorem \ref{theorem reductive} and Corollary \ref{cor a1 fiber sequence}]
\label{theorem intro main}
If $G$ is a reductive algebraic group over a perfect field $k$, then $\pi_0^{\A^1}(G)$ is strongly $\A^1$-invariant.  Consequently, for any Nisnevich locally trivial $G$-torsor $\mathcal P \to \mathcal X$ in $\mathcal H(k)$, 
\[
\mathcal P \to \mathcal X \to BG 
\]
is an $\A^1$-fiber sequence.
\end{theorem}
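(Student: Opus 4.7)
The plan is to establish strong $\A^1$-invariance of $\pi_0^{\A^1}(G)$ by d\'evissage to the case of almost simple simply connected algebraic groups. Granting this, the $\A^1$-fiber sequence assertion for a Nisnevich locally trivial $G$-torsor $\mathcal P \to \mathcal X$ follows immediately from Morel's criterion \cite[Theorem 6.50]{Morel-book} recalled in the introduction, so the real content lies in the strong $\A^1$-invariance statement.

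Since $k$ is perfect, I would invoke the structure theory of reductive groups to produce a central isogeny
\[
1 \to \mu \to Z \times G^{sc} \to G \to 1,
\]
where $Z$ is the connected centre (a torus), $G^{sc}$ is the simply connected cover of the derived subgroup, and $\mu$ is a finite central $k$-group. Tori are $\A^1$-rigid, so $\pi_0^{\A^1}(T) = T$ and strong $\A^1$-invariance is classical; decomposing $G^{sc}$ into its almost simple factors (possibly defined over finite separable extensions of $k$) further reduces matters to the almost simple simply connected case. Transferring strong $\A^1$-invariance from $Z$ and $G^{sc}$ to the quotient $G$ should use compatibility of $\pi_0^{\A^1}$ with central isogenies having finite \'etale kernel, together with the stability of strong $\A^1$-invariance under such extensions. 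For an almost simple simply connected group that is isotropic, strong $\A^1$-invariance of $\pi_0^{\A^1}$ is the main theorem of Asok--Hoyois--Wendt \cite{AHW2, AHW3}, so only the anisotropic case remains.

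The anisotropic case is the principal obstacle and is precisely the content flagged in the abstract. The decisive identification to prove is
\[
\pi_0^{\A^1}(G)(\Spec L) = G(L)/R
\]
for every finitely generated field extension $L/k$, where $R$ denotes Manin's $R$-equivalence. The surjection $G(L)/R \twoheadrightarrow \pi_0^{\A^1}(G)(\Spec L)$ is formal, since any $R$-equivalence is in particular a naive $\A^1$-homotopy; the hard direction is injectivity, namely that for an anisotropic semisimple simply connected $G$, a chain of naive $\A^1$-homotopies between $L$-points cannot produce identifications beyond those already given by $R$-equivalence. Geometrically, an anisotropic such $G$ contains no rational curves, so individual $\A^1$-homotopies are controlled at the level of function fields; the essential technical task is to propagate this control through \emph{chains} of $\A^1$-homotopies over function fields of higher-dimensional varieties. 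Once the identification is in hand, strong $\A^1$-invariance of the sheaf $G/R$ can be read off from standard structural results on $R$-equivalence for semisimple simply connected groups, completing the anisotropic case and hence, by the d\'evissage above, the proof.
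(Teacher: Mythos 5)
Your d\'evissage plan is structurally parallel to the paper's, but there are two genuine gaps where you defer to "compatibility" or "standard structural results" in exactly the places where the paper has to do real work, and one of your reductions is wrong as stated.

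\textbf{The isogeny transfer is not a formality.} You assert that strong $\A^1$-invariance passes to $G$ from $Z \times G^{sc}$ "using compatibility of $\pi_0^{\A^1}$ with central isogenies having finite \'etale kernel." First, the kernel $\mu$ of a central isogeny of semisimple groups need not be \'etale in positive characteristic (e.g.\ $\mu_p$ for $\mathrm{SL}_p \to \mathrm{PGL}_p$ in characteristic $p$); the paper works with an arbitrary finite group of multiplicative type. Second, and more seriously, $\pi_0^{\A^1}$ does \emph{not} behave in any obvious way under such quotients: the kernel lives in $\pi_0^{\A^1}(B_{\text{\'et}}\mu)$, and one must control the image of $\pi_0^{\A^1}(G) \to \pi_0^{\A^1}(B_{\text{\'et}}\mu)$. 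The paper's Lemma \ref{lemma isogeny} does this by combining the $\A^1$-fiber sequence $\~G \to G \to B_{\text{\'et}}\mu$ (which itself already requires knowing $\pi_0^{\A^1}(\~G)$ is strongly $\A^1$-invariant, via \cite[Theorem 6.50]{Morel-book}), the $\A^1$-invariance of $\mathcal{H}^1_{\text{\'et}}(\~G)$ from Elmanto--Kulkarni--Wendt \cite{EKW}, and the Choudhury--Hogadi theorem that $\A^1$-invariant quotients of strongly $\A^1$-invariant sheaves are strongly $\A^1$-invariant. None of this is "compatibility"; it is the second of the two main ingredients the introduction highlights, and your proposal contains no substitute for it.

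\textbf{Sections over fields do not determine strong $\A^1$-invariance.} In the anisotropic case you reduce to proving $\pi_0^{\A^1}(G)(L) = G(L)/R$ for field extensions $L/k$, and then claim strong $\A^1$-invariance "can be read off from standard structural results on $R$-equivalence." This does not follow: a Nisnevich sheaf is not determined by its sections over fields, and $H^1_{\text{Nis}}(-, \mathcal F)$ certainly is not. The paper's actual route is to prove a \emph{stronger} statement, that $\pi_0^{\A^1}(G)$ is a \emph{birational} sheaf (Theorem \ref{theorem birational}), via a reduction to henselian local rings (Lemma \ref{lemma henselian reduction}), a discrete valuation ring lemma in the isotropic case (Lemma \ref{lemma dvr isotropic}) and an extension-across-codimension-one argument in the anisotropic case (Lemma \ref{lemma anisotropic}); birationality together with $\A^1$-invariance gives strong $\A^1$-invariance by a direct \v{C}ech argument (Remark \ref{rmk birationality and strong a1-invariance}). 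It is precisely in this reduction, where one embeds the residue field of a smooth henselian local ring back into the ring, that perfectness of $k$ is used --- a step your proposal never identifies. Finally, your geometric heuristic that "an anisotropic such $G$ contains no rational curves" is false; what is true and actually used (via Gabber's compactifications) is that rational maps $\P^1 \dashrightarrow G$ extend to all $k$-rational points of $\P^1$, which is a much weaker statement.
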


\begin{remark}
It is easy to see that the conclusion of Theorem \ref{theorem intro main} holds for any algebraic group over a perfect field.  Indeed, strong $\A^1$-invariance of $\pi_0^{\A^1}(G)$ for an arbitrary algebraic group $G$ over a perfect field reduces to the case of affine linear algebraic group by Chevalley's structure theorem (see \cite[Section 5.2]{Wendt}, for example), which says that any algebraic group $G$ over a perfect base field $k$ admits a linear algebraic subgroup $H$ such that the quotient $G/H$ is an abelian variety.  Since abelian varieties are $\A^1$-rigid, it follows that $\pi_0^{\A^1}(H) \simeq \pi_0^{\A^1}(G)$.  One then reduces further to connected linear algebraic groups by \cite[Proposition 5.6]{Wendt}.  The quotient of any connected linear algebraic group by its unipotent radical is a reductive group.  One can reduce to the case of reductive groups by observing that the unipotent radical is $\A^1$-contractible over a perfect field, being isomorphic to an affine space (see \cite[Remark A.3]{Kambayashi-Miyanishi-Takeuchi}, for example). 
\end{remark}

There are two main ingredients in the proof of Theorem \ref{theorem intro main}.  The first one states that if $G$ as in Theorem \ref{theorem intro main} is moreover semisimple and simply connected, then $\pi_0^{\A^1}(G)$ is a birational sheaf (see Theorem \ref{theorem birational}).  The perfectness assumption on $k$ is needed here since our arguments require the study of base change of $G$ to the coefficient field of a smooth henselian local ring, for which we require the coefficient field to contain a copy of $k$.  Note that even in the case of isotropic, semisimple, simply connected and absolutely almost simple $G$, Theorem \ref{theorem birational} provides a refinement of the result in \cite{AHW2, AHW3} about the strong $\A^1$-invariance of $\pi_0^{\A^1}(G)$.  The second key ingredient in the proof of Theorem \ref{theorem intro main} is the preservation of strong $\A^1$-invariance under isogenies of simply connected groups (see Lemma \ref{lemma isogeny}).  This step relies crucially on a tricky result of Choudhury and Hogadi \cite[Theorem 1.3]{Choudhury-Hogadi}, which states that $\A^1$-invariant quotients of strongly $\A^1$-invariant sheaves are strongly $\A^1$-invariant.  Another key input in this step is the $\A^1$-homotopical interpretation of classical results on torsors on the affine line in terms of $\A^1$-connected components of classifying spaces. 

We believe that it should be possible to prove Theorem \ref{theorem intro main} over an arbitrary field.  The second main result of this paper (Theorem \ref{theorem anisotropic R-equivalence}) concerns the comparison of $R$-equivalence classes and sections of $\pi_0^{\A^1}(G)$ over arbitrary fields, generalizing \cite[Main Theorem]{Balwe-Sawant-R-eqivalence-IMRN}.  This leads to the anisotropic analogue of the reinterpretation of \cite[Th\'eor\`eme 5.8, Th\'eor\`eme 7.2]{Gille} in \cite[Corollary 4.3.6]{AHW2}.  

\begin{theorem}[Theorem \ref{theorem anisotropic R-equivalence}, Corollary \ref{cor R-equivalence} and Corollary \ref{cor contraction}]
\label{theorem intro R-equivalence}
Let $k$ be an arbitrary field and let $G$ be a semisimple, simply connected algebraic group over $k$.  Then the natural map $$\pi_0^{\A^1}(G)(k) \to G(k)/R$$ is a bijection.  Moreover, the contraction $\pi_0^{\A^1}(G)_{-1}$ is the trivial point sheaf.
\end{theorem}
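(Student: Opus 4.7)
Theorem \ref{theorem intro R-equivalence} bundles three statements: Theorem \ref{theorem anisotropic R-equivalence} (anisotropic case), Corollary \ref{cor R-equivalence} (general semisimple simply connected case), and Corollary \ref{cor contraction} (vanishing of the first contraction). I would address them in that order.

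\emph{The anisotropic case.} Suppose $G$ is semisimple, simply connected, and anisotropic over an arbitrary field $k$. The tautological surjection $G(k)/R \twoheadrightarrow \pi_0^{\A^1}(G)(k)$ needs to be shown injective. My plan is to reduce to the perfect-base case treated in \cite[Main Theorem]{Balwe-Sawant-R-eqivalence-IMRN}. Passing to the perfect closure $k^{\mathrm{perf}}$, the group $G_{k^{\mathrm{perf}}}$ remains anisotropic, since isotropy of semisimple groups descends along purely inseparable extensions by smooth descent for parabolic subgroup schemes. The cited perfect-field theorem then provides $G(k^{\mathrm{perf}})/R \stackrel{\sim}{\to} \pi_0^{\A^1}(G_{k^{\mathrm{perf}}})(k^{\mathrm{perf}})$. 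Writing $k^{\mathrm{perf}}$ as a filtered colimit of finite purely inseparable extensions $k'/k$ and using continuity of the functor of points of $G$ together with continuity of its Nisnevich sheaf of $\A^1$-connected components along such colimits, the problem reduces to descending chains of rational curves $\A^1_{k'} \to G_{k'}$ back to $k$ up to $R$-equivalence. The geometric input that rigidifies this descent is the absence of $\mathbb{G}_m$-subgroups in anisotropic $G$, which prevents purely inseparable deformation of the endpoints of such chains.

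\emph{The general semisimple simply connected case.} For Corollary \ref{cor R-equivalence}, decompose $G = \prod_i R_{K_i/k} G_i$ with $G_i$ absolutely almost simple simply connected over a finite separable extension $K_i/k$. Both $\pi_0^{\A^1}$ and $R$-equivalence respect finite products and Weil restriction along finite separable extensions, reducing to $G$ absolutely almost simple. In the isotropic case the conclusion is \cite[Corollary 4.3.6]{AHW2}, whose proof does not use perfectness; the anisotropic case is the previous paragraph.

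\emph{Vanishing of the contraction.} For Corollary \ref{cor contraction}, by definition $\pi_0^{\A^1}(G)_{-1}(K) = \ker\bigl(\pi_0^{\A^1}(G)(\mathbb{G}_{m,K}) \to \pi_0^{\A^1}(G)(K)\bigr)$ for restriction along the unit section. Embedding $\pi_0^{\A^1}(G)(\mathbb{G}_{m,K})$ into sections over the generic point $K(t)$ and applying Corollary \ref{cor R-equivalence}, this kernel lies inside the kernel of the specialisation $G(K(t))/R \to G(K)/R$ at $t = 1$. The $\A^1$-invariance of $R$-equivalence for semisimple simply connected groups, in the form $G(K(t))/R = G(K)/R$ (a classical consequence of work of Colliot-Th\'el\`ene--Sansuc and Gille, valid over an arbitrary field), makes this kernel trivial.

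\emph{Main obstacle.} The central difficulty is the anisotropic case: neither $R$-equivalence nor $\pi_0^{\A^1}$ is a priori compatible with non-smooth base change, and coefficient-field arguments are unavailable when $k$ is imperfect. The descent from $k^{\mathrm{perf}}$ to $k$ is made possible by the rigidity furnished by anisotropy, namely the absence of non-constant homomorphisms $\mathbb{G}_m \to G$, which pins down the behaviour of endpoints of rational chains under Frobenius twists.
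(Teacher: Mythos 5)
Your plan for the reduction steps (product of Weil restrictions, separating isotropic and anisotropic cases) matches the paper, and the isotropic case is handled correctly. The anisotropic case, however, is where your proposal diverges sharply and where it has a genuine gap.

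The paper does \emph{not} pass to the perfect closure. It works directly over $k$, and the key new input is Gabber's theorem on compactifications of algebraic groups over arbitrary fields \cite[Th\'eor\`eme 5.2]{GGMB} (also \cite{Gille-owr}), which replaces the Borel--Tits compactification \cite[8.2]{Borel-Tits} used in \cite{Balwe-Sawant-R-eqivalence-IMRN} over perfect fields. Lemma \ref{lemma compactification} records the upshot: every rational map $\P^1_k \dashrightarrow G$ is defined at all $k$-rational points. Combined with properness and a careful choice of elementary Nisnevich cover of $\A^1_k$ (built from henselizations at the finitely many closed points where the rational map is undefined, where the residue fields $L_i$ are nontrivial finite extensions and $G_{L_i}$ is isotropic, permitting \cite[Lemme 4.5]{Gille} to split sections), the paper assembles an $\A^1$-ghost homotopy over $k$. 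Your proposal never engages with any of this.

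Your alternative route --- base change to $k^{\mathrm{perf}}$, apply the perfect-field theorem, then descend --- does not work as stated. First, $\Spec k^{\mathrm{perf}}$ is a filtered limit of $\Spec k'$ with $k'/k$ finite \emph{purely inseparable}, so the transition maps are not \'etale; hence $\Spec k^{\mathrm{perf}}$ is not an essentially smooth $k$-scheme in the sense used here, and there is no continuity formula for $\pi_0^{\A^1}(G)$ (computed in $\mathcal H(k)$) along this limit. Second, even if one could make sense of $\pi_0^{\A^1}(G)(k^{\mathrm{perf}})$ in $\mathcal H(k)$, this is not \emph{a priori} the same object as $\pi_0^{\A^1}(G_{k^{\mathrm{perf}}})(k^{\mathrm{perf}})$ computed in $\mathcal H(k^{\mathrm{perf}})$, which is what \cite[Main Theorem]{Balwe-Sawant-R-eqivalence-IMRN} controls; inseparable base change of the unstable motivic homotopy category has no clean compatibility with $\pi_0^{\A^1}$. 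Third, your appeal to "rigidity furnished by anisotropy" to descend $R$-equivalence chains from $k'$ to $k$ is not an argument: anisotropy says there are no $\mathbb{G}_m$-subgroups, but this does not by itself let you replace a chain of rational curves over $k'$ by one over $k$. In short, the whole point of the paper's Theorem \ref{theorem anisotropic R-equivalence} is to avoid coefficient-field and perfectness reductions by proving the result intrinsically over $k$; your proposal tries to reinstate exactly the reduction that fails.

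On the contraction: your skeleton is fine, but note that the statement $G(K)/R \xrightarrow{\sim} G(K(t))/R$ in the anisotropic case is not merely cited from Colliot-Th\'el\`ene--Sansuc or Gille in the paper; Proposition \ref{prop k(t)} proves it using the identification from Theorem \ref{theorem anisotropic R-equivalence} together with the multiplication trick $h(s,t)=f_\alpha(st)$ and Lemma \ref{lemma compactification} to ensure $h_0 = f_\alpha(0)$ is a well-defined $k$-point. If you wish to cite a classical source for this in the anisotropic, imperfect-field case, you would need to verify that it covers that generality; the paper deliberately does not rely on it.
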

Theorem \ref{theorem intro R-equivalence} vindicates the principle that for semisimple, simply connected groups, $\A^1$-connected components provide the right framework as an anisotropic analogue of the Whitehead group of an isotropic group and can be used to put results about isotropic and anisotropic groups on the same footing.

\subsection*{Acknowledgements}
We thank Matthias Wendt for helpful correspondence regarding the results of \cite{EKW}, Swarnava Mukhopadhyay for pointing us to the literature on principal bundles on curves and the anonymous referee for very helpful comments.

\subsection*{Notations and conventions}
The letter $k$ will always denote a field.  We will denote by $Sm_k$ the big Nisnevich site of finite type, separated schemes over $k$.  Given a simplicial sheaf of sets $\@X$ on $Sm_k$ and an affine smooth scheme $\Spec A$ over $k$, we will often write $\@X(A)$ for $\@X(\Spec A)$ for the sake of brevity.   

We will often make use of essentially smooth schemes, that is, schemes which are filtered inverse limits of diagrams of smooth schemes in which the transition maps are \'etale, affine morphisms.  All presheaves on $Sm_k$ will be extended to essentially smooth schemes by defining $\@F(\ilim U_{\alpha}) = \dlim \@F(U_\alpha)$.   By a henselian local scheme over $k$, we mean an affine scheme of the form $U = \Spec(R)$, where $R$ is a henselian local $k$-algebra.

\section{Birationality and Strong \texorpdfstring{$\A^1$}{A1}-invariance}
\label{section birationality}

In this section, we recall definitions relevant to birationality and strong $\A^1$-invariance of sheaves on $Sm_k$ and basic properties that will used the proofs of our main results.

\begin{definition}(\cite[Definition 1.7]{Morel-book})
\label{defn a1-invariance}
Let $\@F$ be a presheaf of sets on $Sm_k$.
\begin{itemize}
\item We say that $\@F$ is \emph{$\A^1$-invariant} if for every $U \in Sm_k$, the map $ \@F(U)\to \@F(U\times \A^1)$ induced by the projection $U\times \A^1 \to U$, is a bijection.

\item We say that $\@F$ is \emph{strongly $\A^1$-invariant} if for every $U \in Sm_k$, the map $ H^i_{\rm Nis}(U, \@F) \to H^i_{\rm Nis}(U\times \A^1, \@F)$ induced by the projection $U\times \A^1 \to U$, is a bijection for $i=0,1$.
\end{itemize}
\end{definition}

\begin{definition}(\cite[Definition 6.1.1]{Asok-Morel})
\label{defn birational sheaf}
A presheaf $\@F$ of sets on $Sm_k$ is said to be \emph{birational} if 
\begin{enumerate}
\item for every $X \in Sm_k$ with irreducible components $X_1, \ldots, X_n$, the map 
\[
\@F(X) \to \prod_{1 \leq i \leq n} \@F(X_i) 
\]
is a bijection; and
\item for every $X \in Sm_k$ and a dense open subscheme $U$ of $X$, the restriction map $\@F(X) \to \@F(U)$ is a bijection.
\end{enumerate}
A sheaf of sets is said to be birational if its underlying presheaf is birational.
\end{definition}

\begin{remark}
\label{rmk birationality and strong a1-invariance} 
Let $\@F$ be a birational presheaf of (not necessarily abelian) groups on $Sm_k$.  Then $\@F$ is already a Nisnevich sheaf by \cite[Lemma 6.1.2]{Asok-Morel}.  Note that $\@F$ is flasque by definition.  It is well-known that the Nisnevich cohomology $H^1_{\rm Nis}(X, \@F)$ vanishes for all $X \in Sm_k$; we include a proof here for the convenience of the reader.  Take an element of $H^1_{\rm Nis}(X, \@F)$, which is represented by a Nisnevich locally trivial $\@F$-torsor $P$ on $X$.  Let $U$ be a maximal open subset of $X$ on which this torsor is trivial.  Suppose, if possible, that $U \neq X$.  Let $\eta$ be a generic point of $X \setminus U$.  There exists an \'etale map $V \to X$ such that $U \coprod V$ is an elementary Nisnevich cover of of an open subset $U'$ of $X$ containing $U$ and $\eta$ and such that the torsor $P$ is trivial on $V$.  Since $\@F(V) \to \@F(U \times_{U'} V)$ is surjective, a straightforward \v{C}ech cohomology calculation shows that $P$ is trivial on $U'$, contradicting the choice of $U$.  Hence, $H^1_{\rm Nis}(X, \@F) = 0$, for every $X \in Sm_k$.  Thus, we conclude that if $\@F$ is a birational and $\A^1$-invariant sheaf of groups, then it is strongly $\A^1$-invariant. 
\end{remark}

Birationality of a Nisnevich sheaf of groups on $Sm_k$ can be checked by verifying the condition given in Definition \ref{defn birational sheaf} on smooth henselian schemes.

\begin{lemma}
\label{lemma henselian reduction}
Let $\mathcal F$ be an $\mathbb{A}^1$-invariant sheaf of groups on $Sm_k$.  Suppose that for every smooth henselian local $k$-algebra $R$ with quotient field $K$, the restriction map $\mathcal F(R) \to \mathcal F(K)$ is an isomorphism.  Then $\mathcal F$ is a birational sheaf.
\end{lemma}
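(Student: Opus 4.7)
My plan is to verify the two conditions of Definition~\ref{defn birational sheaf} in turn. Condition (1) is automatic: the irreducible components of a smooth $k$-scheme coincide with its connected components, and $\mathcal F$ is a Nisnevich sheaf. So the main task is condition (2): for every $X \in Sm_k$ and every dense open $U \subseteq X$, the restriction map $\mathcal F(X) \to \mathcal F(U)$ is a bijection. After passing to connected components I may assume $X$ is irreducible with generic point $\eta \in U$, so that for each $x \in X$ the henselization $\mathcal O_{X,x}^h$ is an essentially smooth henselian local $k$-algebra whose fraction field $K_x^h$ is an extension of the function field $K(X)$.

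I would first establish injectivity. Being a Nisnevich sheaf, $\mathcal F$ is separated with respect to its Nisnevich stalks $\mathcal F(\mathcal O_{X,x}^h)$, and by hypothesis each such stalk injects into $\mathcal F(K_x^h)$. The composite
\[
\mathcal F(X) \longrightarrow \mathcal F(\mathcal O_{X,x}^h) \hookrightarrow \mathcal F(K_x^h)
\]
factors as $\mathcal F(X) \to \mathcal F(U) \to \mathcal F(K(X)) \to \mathcal F(K_x^h)$, because the morphism $\Spec \mathcal O_{X,x}^h \to X$ sends the generic point of its source to $\eta \in U$. Hence any two sections of $\mathcal F(X)$ that agree on $U$ agree at every Nisnevich stalk and so agree on $X$. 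In particular, $\mathcal F(W) \hookrightarrow \mathcal F(K(W))$ is injective for every irreducible smooth scheme $W$.

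For surjectivity I would build an extension of $s \in \mathcal F(U)$ by Nisnevich gluing. For each $x \in X \setminus U$, the image of $s$ in $\mathcal F(K_x^h)$ (obtained via $\Spec K_x^h \to U \hookrightarrow X$) lifts uniquely by the hypothesis to some $\tilde s_x \in \mathcal F(\mathcal O_{X,x}^h)$. Since the Nisnevich stalk is the filtered colimit of $\mathcal F$ over étale neighborhoods, $\tilde s_x$ is represented by a section on some irreducible étale neighborhood $(V_x, y_x) \to (X, x)$ with $\kappa(y_x) = \kappa(x)$. The family $\{U\} \cup \{V_x : x \in X \setminus U\}$ is then a Nisnevich cover of $X$. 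To glue, I would check that $s$ and $\tilde s_x$ agree on $U \times_X V_x$ and that $\tilde s_x$ and $\tilde s_y$ agree on $V_x \times_X V_y$. In each case, every irreducible component of the relevant fiber product is smooth and dominates $X$, so both restrictions have matching images at the generic point of each component by construction; the injectivity established above then upgrades generic agreement to honest equality on each component, and hence on the whole. Nisnevich gluing delivers the desired section $\tilde s \in \mathcal F(X)$ with $\tilde s|_U = s$.

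The delicate part is the bookkeeping in the gluing step: one must ensure that the étale neighborhoods are irreducible and that the sections $\tilde s_x$ are spread out coherently enough to agree at every generic point of each base change involved. One can avoid infinite Nisnevich covers by inducting on $\dim(X \setminus U)$ and handling only generic points of $X \setminus U$ at each stage. Notably, the $\mathbb A^1$-invariance hypothesis does not appear to be used in this direct argument, so I suspect it is included to match the setting in which the criterion is later applied rather than being strictly necessary for the conclusion.
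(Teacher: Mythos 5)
Your proposal is correct, and the overall strategy matches the paper's: reduce to an irreducible $X$, lift $s \in \mathcal F(U)$ to each Nisnevich stalk via the hypothesis, spread out to \'etale neighborhoods, and then use Nisnevich descent together with an injectivity statement to produce the extension. Two details differ, and both are worth remarking on. First, the paper deduces injectivity of $\mathcal F(X) \to \mathcal F(U)$ from $\A^1$-invariance by citing \cite[Lemma 3.45]{Morel-book}, whereas your argument derives it directly from the lemma's hypothesis (separatedness of the Nisnevich sheaf plus the injection $\mathcal F(\mathcal O^h_{X,x}) \hookrightarrow \mathcal F(K^h_x)$). Your observation at the end that $\A^1$-invariance is therefore not strictly needed for this lemma is correct; the paper includes it because Morel's lemma is the ready-made reference and because the lemma is applied only to $\A^1$-invariant sheaves. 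Second, for the descent step the paper passes to a single refinement $\widetilde X \to X$ of the Nisnevich cover and reads off the descent condition from a three-column equalizer diagram, using injectivity of $\mathcal F(\widetilde X \times_X \widetilde X) \to \mathcal F(\widetilde U \times_U \widetilde U)$ to conclude. This sidesteps most of the bookkeeping you flag as delicate (shrinking each $V_x$ so that the chosen representative agrees with $s$ already at the generic point, and then checking pairwise compatibility on the components of $V_x \times_X V_y$). Your explicit cocycle check does go through after those shrinkings, but the paper's equalizer-diagram formulation handles all of it in one shot, which is why I would recommend that route if you were to write this up in full.
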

\begin{proof}
Let $X \in Sm_k$ and let $U$ be a dense open subset of $X$.  By \cite[Lemma 3.45]{Morel-book}, we know that the restriction map $\mathcal F(X) \to \mathcal F(U)$ is injective.  We now show that it is surjective.

Let $\alpha \in \mathcal F(U)$.  For every point $x \in X$, we have a commutative diagram
\[
\begin{xymatrix}{
\mathcal F(X) \ar[r] \ar[d] & \mathcal F(U) \ar[d] \\
\mathcal F(\mathcal O_{X, x}^h) \ar[r] & \mathcal F(K_x^h)}
\end{xymatrix} 
\]
in which $K_x^h$ denotes the quotient field of henselization $\mathcal O_{X, x}^h$ of the local ring $\mathcal O_{X, x}$.  Since the bottom horizontal arrow is an isomorphism by hypothesis, we can lift $\alpha \in \mathcal F(U)$ to an element of $\mathcal F(\mathcal O_{X, x}^h)$, which by a standard limiting argument lifts to an element $\alpha_x \in \mathcal F(V_x)$ for some Nisnevich neighborhood $V_x$ of $x$.  Since $X$ is noetherian, we can find a refinement $\~X \to X$ of the Nisnevich cover $\coprod_{x \in X} V_x \to X$ with $\~X \in Sm_k$ such that $\alpha \in \mathcal F(U)$ lifts to an element $\~\alpha \in \mathcal F(\~X)$.  We now show that $\~\alpha$ descends to an element of $\mathcal F(X)$ which restricts to $\alpha$.  Let $\~U$ denote the inverse image of $U$ under the map $\~X \to X$. We now have a commutative diagram 
\[
\begin{xymatrix}{
\mathcal F(X) \ar[r] \ar[d]& \mathcal F(\~X) \ar@<-.5ex>[r] \ar@<.5ex>[r] \ar[d] & \mathcal F(\~X \times_X \~X) \ar[d]\\ 
\mathcal F(U) \ar[r] & \mathcal F(\~U) \ar@<-.5ex>[r] \ar@<.5ex>[r] & \mathcal F(\~U \times_U \~U) }
\end{xymatrix}
\]
in which the rows are equalizer diagrams and all the vertical maps are injective.  It remains to observe that the image of $\~\alpha$ under the two maps $F(\~X) \rightrightarrows \mathcal F(\~X \times_X \~X)$ is the same.  But this follows from the injectivity of the rightmost vertical map and the fact that the restriction of $\~\alpha$ to $\~U$ is the image of $\alpha \in \mathcal F(U)$.  This shows that $\mathcal F(X) \to \mathcal F(U)$ is surjective, completing the proof.
\end{proof}

Since the sheaf of $\A^1$-connected components of any $H$-group is $\A^1$-invariant by the results of \cite{Choudhury}, the restriction maps on such sheaves happen to be injective.  This observation will prove useful in our analysis of $\A^1$-connected components of a reductive algebraic group.

\begin{lemma}
\label{lemma pi0 injective}
Let $\mathcal G$ be a sheaf of groups on $Sm_k$.  Then for any $X \in Sm_k$ and a dense open subset $U$ of $X$, the restriction map $\pi_0^{\A^1}(\mathcal G)(X) \to \pi_0^{\A^1}(\mathcal G)(U)$ is injective. In particular, the induced map $\pi_0^{\A^1}(\mathcal G)(X) \to \pi_0^{\A^1}(\mathcal G)(k(X))$ is injective.
\end{lemma}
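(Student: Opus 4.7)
The plan is short: combine Choudhury's theorem with the injectivity result already used in the proof of Lemma~\ref{lemma henselian reduction}. Since $\mathcal G$ is a sheaf of groups, it is in particular an $H$-group, so by the cited result of \cite{Choudhury} the sheaf $\pi_0^{\A^1}(\mathcal G)$ is $\A^1$-invariant. Moreover, the group structure on $\mathcal G$ passes through the $\A^1$-localization functor (which is a left adjoint and therefore preserves finite products, hence group objects) to equip $\pi_0^{\A^1}(\mathcal G)$ with the structure of a sheaf of groups on $Sm_k$.

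Next, I would simply apply \cite[Lemma 3.45]{Morel-book}---the same tool invoked in the proof of Lemma~\ref{lemma henselian reduction}---which guarantees that for any $\A^1$-invariant sheaf of groups $\mathcal F$ on $Sm_k$, the restriction map $\mathcal F(X)\to\mathcal F(U)$ is injective whenever $U$ is a dense open subset of $X\in Sm_k$. Taking $\mathcal F=\pi_0^{\A^1}(\mathcal G)$, this immediately yields the desired injectivity of $\pi_0^{\A^1}(\mathcal G)(X)\to\pi_0^{\A^1}(\mathcal G)(U)$.

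For the ``in particular'' statement, I would pass to the limit. Since $\Spec k(X)$ is the essentially smooth scheme $\varprojlim_{U} U$, the limit running over all dense open subsets of $X$, and presheaves on $Sm_k$ are extended to essentially smooth schemes as filtered colimits per the convention fixed in the introduction, we have $\pi_0^{\A^1}(\mathcal G)(k(X))=\varinjlim_{U}\pi_0^{\A^1}(\mathcal G)(U)$. Filtered colimits of injective maps of sets are injective, so the injectivity of each $\pi_0^{\A^1}(\mathcal G)(X)\to\pi_0^{\A^1}(\mathcal G)(U)$ passes to the colimit. There is no genuine obstacle: the only formal point requiring care is that $\pi_0^{\A^1}(\mathcal G)$ carries a group structure (not merely a pointed-set structure), which follows from product-preservation of $\A^1$-localization, and this is precisely what allows the invocation of \cite[Lemma 3.45]{Morel-book}.
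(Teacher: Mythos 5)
Your proposal is correct and takes the same route as the paper, which simply cites \cite[Theorem 4.18]{Choudhury} for the $\A^1$-invariance of $\pi_0^{\A^1}(\mathcal G)$ and \cite[Lemma 3.45]{Morel-book} for the injectivity of restriction to a dense open, with the statement about $k(X)$ following by passage to the filtered colimit. One small slip in your parenthetical: that $\A^1$-localization preserves finite products does not follow from its being a left adjoint (left adjoints preserve colimits, not limits); preservation of finite products by $L_{\A^1}$ is a separate, standard theorem, though the conclusion you draw from it---that $\pi_0^{\A^1}(\mathcal G)$ inherits a group structure---is of course correct and is what Lemma 3.45 requires.
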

\begin{proof}
This is a straightforward consequence of \cite[Theorem 4.18]{Choudhury} and \cite[Lemma 3.45]{Morel-book}. 
\end{proof}

\section{\texorpdfstring{$\A^1$}{A1}-connected components of reductive groups and their classifying spaces} 
\label{section preliminaries alg groups}

One of our main interests behind the study of the sheaf of $\A^1$-connected components of a reductive group $G$ over a field is its application to $\A^1$-fiber sequences due to Morel \cite[Theorem 6.50]{Morel-book} (see also \cite[Theorem 3.5]{Asok-2013-JTop}).  We refer the reader to \cite[Chapter 6]{Morel-book} or \cite[Section 2]{Asok-2013-JTop} for generalities on $\A^1$-fiber sequences.  In this section, we study the sheaf of $\A^1$-connected components of reductive algebraic groups and their classifying spaces. 

\subsection{Reductive groups} \hfill
\label{subsection reductive groups}

We refer the reader to \cite{Milne-algebraic groups} and \cite[Appendix A]{Conrad-Gabber-Prasad} for basic facts from algebraic group theory.  A semisimple, simply connected algebraic group $G$ over a field $k$ is called \emph{isotropic} if every almost simple factor of $G$ contains a non-central $k$-subgroup-scheme isomorphic to $\mathbb G_m$.  A semisimple, simply connected algebraic group is \emph{anisotropic} if it contains no subgroup isomorphic to $\mathbb G_m$.

Let $G$ be a semisimple, simply connected and absolutely almost simple algebraic group over an arbitrary field $k$.  The \emph{Whitehead group} of $G$ over $k$ is defined to be 
\[
W(k, G):=G(k)/G(k)^+, 
\]
where $G(k)^+$ is the normal subgroup of $G(k)$ generated by $k$-rational points of subgroups of $G$ isomorphic to $\mathbb G_a$.  A related invariant of $G$ is the group $G(k)/R$ of \emph{$R$-equivalence} classes, defined to be the quotient of $G(k)$ by the normal subgroup consisting of elements $g \in G(k)$ for which there exists a rational map $h : \P^1 \dashrightarrow G$ defined at $0$ and $1$ such that $h(0)=e$ and $h(1)=g$, where $e$ denotes the neutral element of $G(k)$.  See \cite{Gille} for a detailed discussion of Whitehead groups and $R$-equivalence classes.  As a consequence of affine homotopy invariance of Nisnevich locally trivial $G$-torsors for isotropic groups $G$ (see \cite[Theorem 3.3.7]{AHW2} for the statement over infinite fields and \cite[Theorem 2.4]{AHW3} for the statement over finite fields), Asok, Hoyois and Wendt have shown that the Morel-Voevodsky singular construction $\Sing G$ is $\A^1$-local (see \cite[Remark 3.3.8]{AHW2}).  Thus, if $G$ is in addition isotropic, the natural map $W(k, G) \to G(k)/R$ factors through $\pi_0^{\A^1}(G)(k)$ and induces bijections
\begin{equation}
\label{eq isotropic R-eqivalence}
W(k, G) \xrightarrow{\simeq} \pi_0^{\A^1}(G)(k) \xrightarrow{\simeq} G(k)/R,  
\end{equation}
by \cite[Th\'eor\`eme 7.2]{Gille}.

In the case $G$ is anisotropic, the group $G(k)^+$ is trivial.  However, it was shown in \cite{Balwe-Sawant-R-eqivalence-IMRN} (see also \cite[Theorem 3.6]{Balwe-Sawant-reductive}) that over an infinite perfect field $k$, there is a bijection 
\[
\pi_0^{\A^1}(G)(k) \xrightarrow{\simeq} G(k)/R. 
\]
We first observe that the proof of this result given in \cite{Balwe-Sawant-R-eqivalence-IMRN} can be appropriately modified to make it work over an arbitrary base field.  There are two key ingredients in the proof - the first is the existence of compactifications of algebraic groups over an arbitrary field due to Gabber (see \cite[Th\'eor\`eme 5.2]{GGMB} and also \cite[Theorem 4, Theorem 6]{Gille-owr}), which yields the following consequence.

\begin{lemma}
\label{lemma compactification}
Let $G$ be an anisotropic semisimple group over an arbitrary field $k$.  Then any rational map $h: \P^1_k \dashrightarrow G$ is defined at all the $k$-rational points of $\P^1_k$.
\end{lemma}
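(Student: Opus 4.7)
The plan is to reduce the question to a purely local assertion about formal arcs into a smooth projective compactification of $G$, and then exploit anisotropy to forbid such an arc from limiting to a $k$-rational boundary point.

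\textbf{Step 1: compactify and extend.} By Gabber's theorem \cite[Th\'eor\`eme 5.2]{GGMB} (see also \cite[Theorem 4, Theorem 6]{Gille-owr}) cited just above the lemma, there exists a smooth projective $k$-variety $\bar G$ containing $G$ as a dense open subscheme and with complement $D := \bar G \setminus G$ a strict normal crossings divisor. Since $\P^1_k$ is a smooth projective curve and $\bar G$ is proper over $k$, the rational map $h : \P^1_k \dashrightarrow G \hookrightarrow \bar G$ extends uniquely to a morphism $\bar h : \P^1_k \to \bar G$. It is therefore enough to show $\bar h(p) \in G$ for every $p \in \P^1(k)$, since $h$ is defined at exactly those points of $\P^1$ that $\bar h$ sends into $G$.

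\textbf{Step 2: reduce to a formal arc and contradict anisotropy.} Suppose, for contradiction, that $\bar h(p) \in D(k)$ for some $p \in \P^1(k)$. Completing the local ring of $\P^1_k$ at $p$ with respect to a uniformizer $t$ gives a morphism $\varphi : \Spec k[[t]] \to \bar G$ whose closed point maps to $\bar h(p) \in D(k)$ and whose generic point $\varphi_\eta$ lies in $G(k((t)))$. The anisotropy of $G$, via \cite[Theorem 4, Theorem 6]{Gille-owr}, forbids precisely this configuration: the failure of $\varphi$ to factor through $G$ would, by analyzing the transversal direction at $\bar h(p)$ in the normal crossings structure of $D$, produce a nontrivial $k$-cocharacter $\mathbb G_{m,k} \to G$, contradicting anisotropy. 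This yields the desired contradiction.

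\textbf{Main obstacle.} The substantive difficulty, and indeed the reason Gabber's theorem (as opposed to classical resolution of singularities) is invoked, is that the compactification argument must be carried out over arbitrary, possibly imperfect, base fields. Over an infinite perfect field, this lemma is essentially implicit in the proof of \cite[Th\'eor\`eme 7.2]{Gille}; Gabber's result is what allows the same reduction to go through in full generality. Once the right compactification is in hand, the anisotropy step itself is direct: a transversal boundary $k$-point would yield a split $\mathbb G_m$-subgroup of $G$, which is exactly what anisotropy excludes.
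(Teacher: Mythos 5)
There is a genuine gap in Step 2, and the overall route is more complicated than the paper's proof in a way that does not actually close the argument.

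The key content of \cite[Th\'eor\`eme 5.2]{GGMB} (resp.\ \cite[8.2]{Borel-Tits} in characteristic zero), which the paper invokes directly, is that for an anisotropic semisimple $G$ over $k$ there exists a proper compactification $\overline{G}$ of $G$ with the property $\overline{G}(k) = G(k)$ --- that is, the boundary $D := \overline{G} \setminus G$ has \emph{no} $k$-rational points. Once one has this, the lemma is immediate: extend $h$ to $\overline{h}: \P^1_k \to \overline{G}$ by properness of $\overline{G}$ and by $\P^1_k$ being a smooth curve; for any $p \in \P^1(k)$ one has $\overline{h}(p) \in \overline{G}(k) = G(k)$, so $h$ is defined at $p$. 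This is exactly the proof the paper follows, transplanted from \cite[Lemma 3.6]{Balwe-Sawant-R-eqivalence-IMRN} with the compactification input upgraded from Borel--Tits to Gabber.

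Your Step 1 is close to this in spirit, but you assert that Gabber's theorem produces a \emph{smooth projective} compactification with \emph{strict normal crossings} boundary. That is not what \cite[Th\'eor\`eme 5.2]{GGMB} provides, and over an imperfect field of positive characteristic such a compactification is not available in general (this would require resolution of singularities). More importantly, you then do not use the crucial conclusion $\overline{G}(k) = G(k)$ at all; instead, in Step 2 you assume a boundary $k$-point exists and try to rule it out by hand, claiming that a $k[[t]]$-arc limiting to a boundary $k$-point would, ``by analyzing the transversal direction\ldots in the normal crossings structure of $D$,'' yield a nontrivial $k$-cocharacter $\mathbb{G}_{m,k} \to G$. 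This is not a routine deduction: extracting a cocharacter of $G$ over $k$ from such an arc is precisely the hard part of Borel--Tits/GGMB, and the actual arguments go through equivariance of the compactification and the structure theory of $G$, not through the local geometry of an SNC divisor. As written, Step 2 re-derives (without proof) the very statement whose correct form you should simply cite. Replacing Step 2 by the one-line observation $\overline{h}(p) \in \overline{G}(k) = G(k)$, and dropping the unjustified smoothness/SNC claim in Step 1, repairs the argument and recovers the paper's proof.
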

\begin{proof}
Follow exactly the proof of \cite[Lemma 3.6]{Balwe-Sawant-R-eqivalence-IMRN} with \cite[8.2]{Borel-Tits} replaced by \cite[Th\'eor\`eme 5.2]{GGMB} (or \cite[Theorem 4, Theorem 6]{Gille-owr}).
\end{proof}

The second key ingredient is the choice of a Nisnevich cover of the affine line over which an $R$-equivalence between two $k$-points can be completed to give an $\A^1$-ghost homotopy connecting the two points.  This is where the proof of Theorem \ref{theorem anisotropic R-equivalence} differs from the one given in \cite{Balwe-Sawant-R-eqivalence-IMRN}.

\begin{theorem}
\label{theorem anisotropic R-equivalence}
Let $k$ be an arbitrary field and let $G$ be a semisimple, simply connected, absolutely almost simple and anisotropic group over $k$.  Then the natural map $$\pi_0^{\A^1}(G)(F) \to G(F)/R$$ is a bijection, for every field extension $F$ of $k$. 
\end{theorem}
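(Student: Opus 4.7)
My strategy follows the general framework of \cite{Balwe-Sawant-R-eqivalence-IMRN}, reducing the bijectivity to two assertions:
\emph{(a)} every section $s \in \pi_0^{\A^1}(G)(F)$ is represented by an $F$-rational point of $G$, and
\emph{(b)} if $g_0, g_1 \in G(F)$ represent the same section in $\pi_0^{\A^1}(G)(F)$, then $g_0$ and $g_1$ are $R$-equivalent.
Once both are established, the map $\pi_0^{\A^1}(G)(F) \to G(F)/R$ sending a section to the $R$-class of any of its lifts is well-defined and bijective: well-definedness and injectivity come from (b), and surjectivity is immediate from (a) together with the fact that an $R$-equivalence between two $F$-points extends, via Lemma \ref{lemma compactification}, to a naive $\A^1$-homotopy after a suitable Nisnevich refinement. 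Since both sides are compatible with field extensions we may assume $F = k$; the argument then treats uniformly the case where $G_F$ remains anisotropic and the case where it becomes isotropic, the latter being already covered by \eqref{eq isotropic R-eqivalence}.

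Step (a) goes through essentially as in \cite{Balwe-Sawant-R-eqivalence-IMRN}. A section $s$ of $\pi_0^{\A^1}(G)$ over $\Spec k$ is, after sheafification, represented by a finite chain of rational maps $\P^1_k \dashrightarrow G$ joining $k$-points, and Lemma \ref{lemma compactification} guarantees that each such rational map is regular at every $k$-rational point of $\P^1_k$. Consequently the chain lifts to a sequence of $k$-rational points in $G$ and produces the desired representative $g \in G(k)$ for $s$.

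Step (b) is the main obstacle and the sole point of departure from \cite{Balwe-Sawant-R-eqivalence-IMRN}. An equality of sections in $\pi_0^{\A^1}(G)(k)$ is witnessed by a chain of $\A^1$-ghost homotopies between $g_0$ and $g_1$, each consisting of a morphism $H : U \to G$ from a Nisnevich cover $U$ of $\A^1_k$ together with chosen preimages of $0$ and $1$ mapping to that step's endpoints. The technical heart of the proof is to arrange $U$ so that $H$ collapses to a single rational map $\P^1_k \dashrightarrow G$; Lemma \ref{lemma compactification} then forces this map to be regular at $0$ and $1$ and exhibits an $R$-equivalence between the endpoints.

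The hard part will be constructing such a cover over an arbitrary field. The construction in \cite{Balwe-Sawant-R-eqivalence-IMRN} exploited a plentiful supply of $k$-rational points on $\A^1_k$ and consequently required $k$ to be infinite and perfect. To sidestep these hypotheses, my plan is to replace it by an elementary Nisnevich square of the form $(\A^1_k \setminus S) \sqcup V \to \A^1_k$, where $S \supseteq \{0, 1\}$ is a finite set of closed points of $\A^1_k$ absorbing the indeterminacy of $H$ and $V \to \A^1_k$ is an étale neighborhood of $S$ in which each point of $S$ has a preimage with trivial residue field extension; the two pieces of data then glue to a rational map $\P^1_k \dashrightarrow G$. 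Verifying that every ghost homotopy can be refined through covers of this specific form, and that the resulting rational map takes the prescribed values $g_0$ and $g_1$ at $0$ and $1$, is the core technical step that replaces the infinite-field choice of covers in the earlier work and makes the finite-field case no harder than the infinite one.
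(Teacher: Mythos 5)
Your decomposition gets the proof backwards. The direction you call (b) — ``same section in $\pi_0^{\A^1}(G)(k)$ implies $R$-equivalent'' — is the \emph{easy} direction of the theorem, and the paper handles it in three lines: pick a compactification $\-G$ with $G(k) = \-G(k)$, note that equality in $\pi_0^{\A^1}(G)(k)$ forces equality in $\pi_0^{\A^1}(\-G)(k)$ by functoriality, and then invoke Asok--Morel's result that $\pi_0^{\A^1}$ of a proper scheme agrees with the sheaf of $\A^1$-chain-connected components, so the two points are $\A^1$-chain homotopic in $\-G(k) = G(k)$, hence $R$-equivalent. You instead propose to collapse a ghost homotopy $H : U \to G$ over a Nisnevich cover of $\A^1_k$ to a single rational map $\P^1_k \dashrightarrow G$, but this step does not work: the defining feature of a ghost homotopy is that its components over the pieces of the cover need \emph{not} agree on overlaps (they only agree up to a lower-level homotopy), so there is no reason for $H$ to descend to anything on $\A^1_k$ itself, let alone to a rational map.

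The genuine content of the theorem is the opposite implication — given an $R$-equivalence $h : \P^1_k \dashrightarrow G$ defined at $0$ and $1$, produce a ghost $\A^1$-homotopy identifying the images of $h(0)$ and $h(1)$ in $\pi_0^{\A^1}(G)(k)$. You mention this only in a one-line aside (mislabelled as ``surjectivity''), and the key arithmetic input is entirely absent from your plan. The point, following Balwe--Sawant, is that $h$ fails to be defined only at finitely many closed points $p_1,\ldots,p_n \in \A^1_k$ with nontrivial residue field extensions $L_i/k$, over which $G$ \emph{becomes isotropic}; hence $G$ is isotropic over the quotient field $K_i$ of the henselian local ring $\mathcal O^h_{\A^1_k,p_i}$, and Gille's decomposition $G(K_i) = G(K_i)^+ \cdot G(\mathcal O^h_{\A^1_k,p_i})$ lets one replace $h|_{\Spec K_i}$, up to naive $\A^1$-chain homotopy, by a section $\rho_i$ defined on the whole henselization. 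This is precisely what allows the data $\{h|_{\A^1_k\setminus\{p_i\}},\ \rho_i\}$ over an elementary Nisnevich cover to be assembled into a ghost homotopy. Your plan to take ``an \'etale neighborhood of $S$ in which each point of $S$ has a preimage with trivial residue field extension'' only names the shape of the cover; without the isotropy observation and Gille's lemma there is no mechanism for extending the rational map across the bad points, and the argument does not close.
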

\begin{proof}
We closely follow the proof of \cite[Theorem 4.2]{Balwe-Sawant-R-eqivalence-IMRN}.  We are reduced to the case $F=k$ as in \cite{Balwe-Sawant-R-eqivalence-IMRN}.  By \cite[8.2]{Borel-Tits} (in characteristic $0$) or \cite[Th\'eor\`eme 5.2]{GGMB} (in arbitrary characteristic), there exists a compactification $\-G$ of $G$ such that $G(k) = \-G(k)$.  If two elements $p$ and $q$ of $G(k)$ map to the same element in $\pi_0^{\A^1}(G)(k)$, then they map to the same element in $\pi_0^{\A^1}(\-G)(k)$ by functoriality.  Since $\-G$ is proper over $k$, we see that $p$ and $q$ map to the same element in $\mathcal S(\-G)(k)$ by \cite[Theorem 2.4.3]{Asok-Morel}.  Therefore, $p$ and $q$ are $\A^1$-chain homotopic $k$-rational points of $\-G$ and hence, $R$-equivalent in $G(k) = \-G(k)$.  In other words, the natural surjective map $G(k) \to G(k)/R$ factors through $\pi_0^{\A^1}(G)(k)$.

We now show the injectivity of the map $\pi_0^{\A^1}(G)(k) \to G(k)/R$.  Let $p$ and $q$ be two elements of $G(k)$, which are $R$-equivalent through a rational map $h: \P^1_{k} \dashrightarrow G$ defined on $0$ and $1$ such that $h(0) = p$ and $h(1) = q$.  Choose a compactification $\-G$ of $G$ such that $G(k) = \-G(k)$ and uniquely extend the rational map $h$ to a morphism $\-h: \P^1_{k} \to \-G$.  Now, $h$ is not defined only at points of $\A^1_{k}$ having residue fields that are non-trivial finite extensions of $k$.  We define $V := \-h^{-1}(G) \cap \A^1_k$ which is a Zariski open subscheme of $\A^1_k$. Let $\A^1_k \backslash V = \{p_1, \ldots, p_n\}$ and let the residue field at $p_i$ be $L_i$. We define $h_V: V \to G$ by $h_V := \-h|_{V}$.  As observed in \cite[Proof of Theorem 4.2]{Balwe-Sawant-R-eqivalence-IMRN}, $G_{L_i}$ is an isotropic group for each $i$.

For each $i$, let $K_i$ denote the field of fractions of the henselization $\mathcal O_{\A^1_k, p_i}^h$ of the local ring of $p_i$ at $\A^1_k$.  We set $W_i : = \Spec\left(\mathcal O_{\A^1_k, p_i}^h\right)$.  Note that $K_i$ is a separable extension of $k(t)$.  Since $G_{L_i}$ is isotropic, so is $G_{K_i}$ by \cite[Proposition A.2]{CTHHKPS}.  By \cite[Lemme 4.5]{Gille}, we have 
\[
G(K_i) = G(K_i)^+ \cdot G(\mathcal O_{\A^1_k, p_i}^h),
\]
so any morphism $\Spec(K_i) \to G$ is $\A^1$-chain homotopic to a morphism that factors as $\Spec(K_i) \xrightarrow{\eta_i} W_i \to G$, where $\eta_i:\Spec(K_i) \to W_i$ denotes the generic point of $W_i$.  Now, consider the pullback $h_V|_{W_i}: V \times_{\A^1_k} W_i \to G$ of $h_V$ and denote its restriction to $\Spec(K_i)$ by $h_i$.  By the above observation, $h_i: \Spec(K_i) \to G$ is $\A^1$-chain homotopic to a morphism that factors as 
\[
\Spec(K_i) \xrightarrow{\eta_i} W_i \xrightarrow{\rho_i} G,
\]
for some $\rho_i \in G(\mathcal O_{\A^1_k, p_i}^h)$.  Define $W : = \coprod_i W_i$ and $h_W:=\coprod \rho_i: W \to G$. 

We define $p_V: V \to \A^1_k$ to be the inclusion. For each $i$, let $p_i: W_i \to \A^1_k$ denote the natural map induced by $\Spec\left(\mathcal O_{\A^1_k, p_i}^h\right) \to \A^1_k$ and define $p_W:=\coprod_i p_i : W \to \A^1_k$.  By a standard limiting argument, we may replace $W_i$ with a scheme that is finite \'etale over $\A^1_k$.  We may further shrink $W_i$ suitably to ensure that 
\begin{itemize}
\item $W_i$ is an \'etale neighbourhood of $p_i$;
\item $\{p_V, p_W\}$ is an elementary Nisnevich cover of $\A^1_k$; and
\item the $\A^1$-chain homotopy between $h_i$ and $\rho_i \circ \eta_i$ spreads over $W_i$.
\end{itemize}
We then modify $W$ and $h_W$ accordingly.  We now apply \cite[Lemma 2.4]{Balwe-Sawant-R-eqivalence-IMRN} to conclude that $p$ and $q$ map to the same element in $\pi_0^{\A^1}(G)(k)$. 
\end{proof}

Combining Theorem \ref{theorem anisotropic R-equivalence} with the results of Asok-Hoyois-Wendt and Gille, we get the following generalizations of \cite[Theorem 3.6]{Balwe-Sawant-reductive} and \cite[Theorem 1]{Balwe-Sawant-reductive}, which we record below.

\begin{corollary}
\label{cor R-equivalence}
Let $k$ be an arbitrary field and let $G$ be a semisimple, simply connected algebraic group over $k$.  Then the natural map $\pi_0^{\A^1}(G)(k) \to G(k)/R$ is a bijection. 
\end{corollary}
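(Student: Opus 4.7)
The plan is to reduce the corollary to the absolutely almost simple simply connected case, where the bijection is supplied either by the isotropic result \eqref{eq isotropic R-eqivalence} (due to Asok--Hoyois--Wendt and Gille) or by the anisotropic result Theorem \ref{theorem anisotropic R-equivalence}.  The reduction uses the structure theorem for semisimple simply connected algebraic groups, which writes any such $G$ over $k$ as a finite product of Weil restrictions of absolutely almost simple simply connected groups defined over finite separable extensions of $k$.

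Concretely, I would write $G = \prod_{i=1}^n R_{K_i/k}(G_i)$ with each $K_i/k$ a finite separable extension and each $G_i$ absolutely almost simple simply connected over $K_i$.  Both invariants appearing in the statement respect finite products of $H$-groups: for $\pi_0^{\A^1}$ this follows from Choudhury's identification of $\pi_0^{\A^1}(G)$ with the Nisnevich sheaf associated to $X \mapsto \pi_0(\Sing(G)(X))$ for $H$-groups $G$, together with the fact that $\Sing$, $\pi_0$ of simplicial sets, and Nisnevich sheafification all preserve finite products; for $R$-equivalence it is immediate from the definition.  We may therefore assume $G = R_{K/k}(H)$ for a single absolutely almost simple simply connected $H$ over a finite separable extension $K/k$.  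The Weil restriction adjunction $R_{K/k}(H)(Y) = H(Y \times_k K)$ for $Y \in Sm_k$ identifies $R_{K/k}(H)(k)$ with $H(K)$, and translates rational maps $\P^1_k \dashrightarrow R_{K/k}(H)$ defined at $0, 1$ into rational maps $\P^1_K \dashrightarrow H$ defined at $0, 1$ with matching values; it also translates $\A^1$-chain and $\A^1$-ghost homotopies between $k$-points of $R_{K/k}(H)$ into the analogous homotopies between the corresponding $K$-points of $H$.  This yields bijections $R_{K/k}(H)(k)/R \simeq H(K)/R$ and $\pi_0^{\A^1}(R_{K/k}(H))(k) \simeq \pi_0^{\A^1}(H)(K)$, so the corollary reduces to showing $\pi_0^{\A^1}(H)(K) \simeq H(K)/R$.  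This is \eqref{eq isotropic R-eqivalence} when $H$ is isotropic over $K$, and Theorem \ref{theorem anisotropic R-equivalence} when $H$ is anisotropic over $K$.

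The main technical point is the Weil restriction compatibility for $\pi_0^{\A^1}$, namely that $\A^1$-ghost homotopies between $k$-points of $R_{K/k}(H)$ correspond bijectively via the adjunction to $\A^1$-ghost homotopies between $K$-points of $H$.  Since base change along the finite \'etale morphism $\Spec(K) \to \Spec(k)$ preserves Nisnevich covers and commutes with fibre products, the Weil restriction adjunction lifts cleanly to the level of $\A^1$-localized simplicial Nisnevich sheaves, so that elementary Nisnevich covers of $\A^1_k$ and the homotopy data they support correspond bijectively to their analogues over $K$.
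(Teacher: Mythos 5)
Your overall strategy coincides with the paper's: reduce to the absolutely almost simple simply connected case by writing $G$ as a product of Weil restrictions $R_{K_i/k}(G_i)$, and then invoke the isotropic case \eqref{eq isotropic R-eqivalence} or the anisotropic case Theorem~\ref{theorem anisotropic R-equivalence}.  The paper simply cites the proof of \cite[Theorem 3.6]{Balwe-Sawant-reductive} for this reduction, whereas you try to spell it out.  The product compatibility and the translation of $R$-equivalence under the Weil restriction adjunction are both fine.

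The gap is in the Weil restriction compatibility for $\pi_0^{\A^1}$.  You assert that the adjunction ``lifts cleanly to the level of $\A^1$-localized simplicial Nisnevich sheaves, so that elementary Nisnevich covers of $\A^1_k$ and the homotopy data they support correspond bijectively to their analogues over $K$.''  This bijectivity claim is false.  Base change along the finite \'etale morphism $\Spec K \to \Spec k$ does take Nisnevich covers of $\A^1_k$ to Nisnevich covers of $\A^1_K$, so a ghost homotopy over $k$ between $k$-points of $R_{K/k}(H)$ yields a ghost homotopy over $K$ between the corresponding $K$-points of $H$, giving a natural surjection $\pi_0^{\A^1}(R_{K/k}(H))(k) \to \pi_0^{\A^1}(H)(K)$.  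But the converse fails: a general elementary Nisnevich cover of $\A^1_K$ (for example, an \'etale cover split at a closed point of $\A^1_K$ whose image in $\A^1_k$ has residue field $K$, so that the entire fibre of $\A^1_K \to \A^1_k$ over that image is forced into the picture) does \emph{not} arise by base change from a Nisnevich cover of $\A^1_k$, and the $K$-scheme $\A^1_K$, viewed merely as a $k$-scheme, is not $\A^1_k$.  Consequently your argument does not show injectivity of $\pi_0^{\A^1}(R_{K/k}(H))(k) \to \pi_0^{\A^1}(H)(K)$, which is precisely the direction that is needed.  The equality $\pi_0^{\A^1}(R_{K/k}(H))(k) \simeq \pi_0^{\A^1}(H)(K)$ is true, but proving it requires something genuinely stronger than bookkeeping of covers: either one shows that pushforward $p_*$ along the finite \'etale map $p : \Spec K \to \Spec k$ commutes with $\A^1$-localization (using that $p_*$ is exact, preserves Nisnevich stalks over henselian local rings, and commutes with $\Sing$ because $\Delta^n_K \cong \Delta^n_k \times_k K$), or one avoids the claim altogether by working with compactifications of $R_{K/k}(H)$ and translating $R$-equivalences across the adjunction, as the paper's Lemma~\ref{lemma weil restriction} does in an analogous situation.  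As written, your reduction does not close this step, and deriving the bijection $\pi_0^{\A^1}(R_{K/k}(H))(k) \simeq \pi_0^{\A^1}(H)(K)$ from the end result would be circular.
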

\begin{proof}
As in \cite[Proof of Theorem 3.6]{Balwe-Sawant-reductive}, we are reduced to showing that $\pi_0^{\A^1}(G)(k) \to G(k)/R$ is a bijection, where $G$ is semisimple, simply connected and absolutely almost simple over $k$.  If $G$ is isotropic, then the desired result follows from \eqref{eq isotropic R-eqivalence}.  If $G$ is anisotropic, then we apply Theorem \ref{theorem anisotropic R-equivalence}.
\end{proof}

\begin{corollary}
Let $G$ be a reductive algebraic group over an arbitrary field $k$.  Then the following conditions are equivalent: 
\begin{enumerate}[label=$(\arabic*)$]
\item $\Sing G$ is $\A^1$-local;
\item $G$ is isotropic;
\item the presheaf $H^1_{\rm Nis}(-, G)$ is $\A^1$-invariant on smooth affine schemes over $k$.
\end{enumerate}
\end{corollary}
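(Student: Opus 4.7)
The plan is to establish $(2) \Rightarrow (1)$, the formal equivalence $(1) \iff (3)$, and the new implication $(1) \Rightarrow (2)$.  For $(2) \Rightarrow (1)$, I would directly invoke the Asok-Hoyois-Wendt affine homotopy invariance theorem for torsors under isotropic reductive groups, namely \cite[Theorem 3.3.7]{AHW2} over infinite fields and \cite[Theorem 2.4]{AHW3} over finite fields.  As observed in \cite[Remark 3.3.8]{AHW2}, affine homotopy invariance of $G$-torsors in this setting is exactly $\A^1$-locality of $\Sing G$.  For the equivalence $(1) \iff (3)$, this is a general fact that does not use reductivity of $G$: the identification $H^1_{\rm Nis}(X, G) \simeq [X, BG]_{\rm Nis}$ for $X \in Sm_k$ together with the loop-delooping relation $\Omega BG \simeq G$ for a Nisnevich sheaf of groups shows that $\A^1$-locality of $\Sing G$ translates into $\A^1$-invariance of $H^1_{\rm Nis}(-, G)$ on the category of smooth affine schemes.

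The main work is in $(1) \Rightarrow (2)$, which I would prove by contraposition.  Assume $G$ is reductive but not isotropic.  The first move is a standard structural reduction to the case of a semisimple, simply connected, absolutely almost simple, and anisotropic group: the radical of an anisotropic reductive group is an anisotropic torus and is therefore $\A^1$-rigid; the central isogeny $G^{sc} \to G$ reduces to the simply connected case; and Weil restriction splits a semisimple simply connected group into a product of absolutely almost simple factors, at least one of which must remain anisotropic.

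For such $G$, Theorem \ref{theorem anisotropic R-equivalence} identifies $\pi_0^{\A^1}(G)(F)$ with $G(F)/R$ for every field extension $F/k$.  Were $\Sing G$ to be $\A^1$-local, the sheaf $\pi_0(\Sing G)$ would coincide with $\pi_0^{\A^1}(G)$, and Corollary \ref{cor contraction} would force the contraction of $\pi_0(\Sing G)$ to be trivial.  The plan is then to contradict $\A^1$-locality by evaluating on a carefully chosen test scheme---such as the henselization of $\A^1_k$ at its generic point or an appropriate localization of $\mathbb{G}_{m,k}$---where the classical specialization results of Gille in \cite{Gille} (combined with Lemma \ref{lemma compactification} over arbitrary fields) detect non-trivial elements of $G(k(t))/R$ whose existence is incompatible with $\A^1$-invariance of $H^1_{\rm Nis}(-, G)$.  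The main obstacle I foresee is converting this sheaf-theoretic incompatibility into a concrete smooth affine scheme $X$ on which $H^1_{\rm Nis}(X, G) \to H^1_{\rm Nis}(X \times \A^1, G)$ fails to be a bijection; I expect this step to hinge on Corollary \ref{cor R-equivalence} applied over $k(t)$ together with the vanishing of $\pi_0^{\A^1}(G)_{-1}$, so that the failure of isotropy is witnessed by an $R$-equivalence class at the generic fibre that cannot be realised in $\pi_0^{\A^1}$ of $\Spec k$.
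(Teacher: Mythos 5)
Your proposal gets the easy directions right but misidentifies the mechanism in the hard direction, so let me flag that concretely.

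The paper itself simply observes that the proof of \cite[Theorem~1]{Balwe-Sawant-reductive} now goes through verbatim over an arbitrary field, because the only input there that required an infinite perfect base was the identification of $\pi_0^{\A^1}(G)$ with $R$-equivalence classes over fields, and this is now supplied by Theorem~\ref{theorem anisotropic R-equivalence} and Corollary~\ref{cor R-equivalence}. Your $(2)\Rightarrow(1)$ and $(1)\Leftrightarrow(3)$ are fine (though $(1)\Leftrightarrow(3)$ is not purely formal delooping: it rests on the Asok--Hoyois--Wendt affine representability theorems, which is where the affineness of the test schemes enters). Your structural reduction for $(1)\Rightarrow(2)$ to the anisotropic, semisimple, simply connected, absolutely almost simple case is also reasonable.

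The endgame, however, is pointed in the wrong direction. You expect to find ``an $R$-equivalence class at the generic fibre that cannot be realised in $\pi_0^{\A^1}$ of $\Spec k$''; but Proposition~\ref{prop k(t)} asserts precisely the opposite, namely that $\pi_0^{\A^1}(G)(k)\to\pi_0^{\A^1}(G)(k(t))$ is an \emph{isomorphism} for semisimple simply connected $G$, so there are no such classes. What actually produces the contradiction is the \emph{naive} connected components presheaf $\pi_0(\Sing G)$, not $\pi_0^{\A^1}(G)$. For anisotropic $G$, every morphism $\A^1_F\to G$ over any field $F$ with $G_F$ anisotropic is constant: extend to $\P^1_F\to \bar G$ using Lemma~\ref{lemma compactification}, observe that $\infty$ then lands in $G$ as well, and conclude by affineness of $G$ and properness of $\P^1$. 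Hence $\pi_0(\Sing G)(F)=G(F)$ for all such $F$. If $\Sing G$ were $\A^1$-local, one would have $\pi_0(\Sing G)\cong\pi_0^{\A^1}(G)$, so in particular $G(k(t))\cong G(k(t))/R$. But $G_{k(t)}$ is still anisotropic, $G(k(t))\supsetneq G(k)$ since $G$ has positive dimension, while $G(k(t))/R\cong G(k)/R$ by Proposition~\ref{prop k(t)} together with Corollary~\ref{cor R-equivalence}; these are incompatible. Corollary~\ref{cor contraction} (triviality of the contraction) is a consequence of Proposition~\ref{prop k(t)}, not an independent lever for this argument, so invoking it does not close the gap. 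You should replace the appeal to a hoped-for nontrivial $R$-class over $k(t)$ by the comparison of $\pi_0(\Sing G)$ with $\pi_0^{\A^1}(G)$ over $k(t)$ just described.
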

\begin{proof}
This follows exactly as in \cite[Proof of Theorem 1]{Balwe-Sawant-reductive}. 
\end{proof}

\begin{remark}
The referee asked us if the isomorphism given in Theorem \ref{theorem anisotropic R-equivalence} and Corollary \ref{cor R-equivalence} is induced by a morphism of sheaves with the following sheafified notion of $R$-equivalence. Define a sheaf $G/\@R$ of $R$-equivalence classes in an algebraic group $G$  to be the Nisnevich sheafification of the presheaf given by the colimit taken over the coequalizer of the diagram
\[
G^U \rightrightarrows G, 
\]
where $U$ varies over open subsets of $\A^1_k$ containing $0$ and $1$.  Let $G$ be a semisimple, simply connected algebraic group over an arbitrary field $k$.  The canonical epimorphism $G \to \pi_0^{A^1}(G)$ can be seen to factor through $G/\@R$ as follows.  For any open neighborhood $V$ of $U \times \{0,1\}$, if $\alpha \in G(V)$, then we claim that the restrictions $\alpha_0$ and $\alpha_1$ of $\alpha$ to $U \times \{0\}$ and $U \times \{1\}$ respectively map to the same element of $\pi_0^{\A^1}(G)(U)$.  Indeed, by \cite[Corollary 4.13]{Choudhury}, $\pi_0^{\A^1}(G)(U)$ injects into $\pi_0^{\A^1}(G)(k(U))$.  The images of $\alpha_0$ and $\alpha_1$ in $\pi_0^{\A^1}(G)(k(U))$ are $R$-equivalent and hence, map to the same element of $\pi_0^{\A^1}(G)(k(U))$, by Corollary \ref{cor R-equivalence}.  Thus, $\alpha_0$ and $\alpha_1$ map to the same element of $\pi_0^{\A^1}(G)(U)$, giving us a canonical morphism $G/\@R \to \pi_0^{\A^1}(G)$.  Corollary \ref{cor R-equivalence} says that this morphism is an isomorphism on sections over fields.  It is interesting to note that we need Corollary \ref{cor R-equivalence} to construct such a morphism of sheaves.
\end{remark}

\subsection{Contractions of \texorpdfstring{$\A^1$}{A1}-connected components of reductive groups} \hfill
\label{subsection contractions}

In this subsection, we record the generalizations of \cite[Th\'eor\`eme 5.8, Th\'eor\`eme 7.2]{Gille} and \cite[Corollary 4.3.6]{AHW2} to anisotropic and hence, to all semisimple, simply connected groups over any field.

\begin{proposition}
\label{prop k(t)}
Let $G$ be a semisimple, simply connected algebraic group over a field $k$.  Then the natural map 
\[
\pi_0^{\A^1}(G)(k) \to \pi_0^{\A^1}(G)(k(t))
\]
is an isomorphism.
\end{proposition}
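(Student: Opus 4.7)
The plan is to translate the statement into a claim about $R$-equivalence classes via Corollary \ref{cor R-equivalence} and then split the surjectivity argument into isotropic and anisotropic cases. Following the reduction employed in the proof of Corollary \ref{cor R-equivalence}, I first reduce to the case where $G$ is semisimple, simply connected, and absolutely almost simple. Applying Corollary \ref{cor R-equivalence} to $G$ over both $k$ and $k(t)$, the desired assertion is equivalent to the bijectivity of the canonical map
\[
G(k)/R \to G(k(t))/R.
\]

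For injectivity, the $\A^1$-invariance of $\pi_0^{\A^1}(G)$ (Choudhury's theorem, recalled in the excerpt) gives $\pi_0^{\A^1}(G)(k) = \pi_0^{\A^1}(G)(\A^1_k)$, and Lemma \ref{lemma pi0 injective} applied with $X = \A^1_k$ (whose function field is $k(t)$) provides the injection $\pi_0^{\A^1}(G)(\A^1_k) \hookrightarrow \pi_0^{\A^1}(G)(k(t))$.

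For surjectivity, I split into cases based on isotropy. If $G$ is isotropic, then the claim is the reinterpretation of \cite[Th\'eor\`eme 5.8]{Gille} via Corollary \ref{cor R-equivalence}. If $G$ is anisotropic, then $k$ must be infinite, since every semisimple, simply connected group over a finite field is quasi-split and hence isotropic. Given $g \in G(k(t))$, represented by a morphism $\phi \colon U \to G$ defined on some dense open $U \subseteq \A^1_k$, I would pick a $k$-rational point $t_0 \in U(k)$ (nonempty because $k$ is infinite and $U$ is a dense open of $\A^1_k$) and construct the rational map $h \colon \A^1_{k(t)} \dashrightarrow G_{k(t)}$ obtained by precomposing the base change $\phi_{k(t)}$ with the linear reparametrization $s \mapsto t + s(t_0 - t)$. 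At $s = 0$, the value of the reparametrization is $t \in k(t)$, which maps under the projection $\A^1_{k(t)} \to \A^1_k$ to the generic point of $\A^1_k$ (lying in $U$ since $U$ is dense); consequently $h$ is defined at $0$ with $h(0) = g$. At $s = 1$, the value is $t_0 \in U(k)$, so $h$ is defined at $1$ with $h(1) = g(t_0) \in G(k) \subseteq G(k(t))$. Thus $h$ exhibits an $R$-equivalence over $k(t)$ between $g$ and the $k$-rational element $g(t_0)$, establishing surjectivity.

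The main obstacle is the explicit construction of the $R$-equivalence in the anisotropic case. The crucial input is the reduction to $k$ being infinite, which guarantees the existence of a $k$-rational point $t_0$ in every dense open of $\A^1_k$ and allows the linear reparametrization $s \mapsto t + s(t_0 - t)$ to interpolate between the generic point of $\A^1_k$ (recovering $g$) and the $k$-point $t_0$ (recovering $g(t_0) \in G(k)$).
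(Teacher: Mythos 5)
Your proof is correct and follows essentially the same route as the paper: reduce to the absolutely almost simple case, translate to $R$-equivalence classes via Corollary \ref{cor R-equivalence}, cite Gille for the isotropic case, and in the anisotropic case exhibit a given $g \in G(k(t))$ as $R$-equivalent over $k(t)$ to a $k$-rational value of the defining rational map by a suitable reparametrization of $\A^1$. The only (cosmetic) difference is that you use an affine-linear reparametrization $s \mapsto t + s(t_0 - t)$ through a chosen $k$-point $t_0 \in U(k)$, justified by the observation that anisotropy forces $k$ to be infinite (Lang's theorem), whereas the paper uses the multiplicative reparametrization $h(s,t) = f_\alpha(st)$ anchored at $0$, whose well-definedness at $s = 0$ is guaranteed by Lemma \ref{lemma compactification} and requires no infinitude hypothesis on $k$; both variants achieve the same goal.
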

\begin{proof}
Clearly, it suffices to prove the statement in the case where $G$ is absolutely almost simple.  By Theorem \ref{theorem anisotropic R-equivalence}, we see that the natural map
\[
\pi_0^{\A^1}(G)(F) \to G(F)/R
\]
is an isomorphism, for every field extension $F$ of $k$.  The proof in the case $G$ is isotropic follows from \cite[Th\'eor\`eme 5.8, Th\'eor\`eme 7.2]{Gille}.  Suppose that $G$ is anisotropic.  Note that the map $\pi_0^{\A^1}(G)(k) \to \pi_0^{\A^1}(G)(k(t))$ is injective.  For surjectivity, let $\alpha \in \pi_0^{\A^1}(G)(k(t)) = G(k(t))/R$.  Now, $\alpha$ corresponds to a rational map $f_\alpha: \A^1 \dashrightarrow G$.  Precomposing by the multiplication map $\A^1 \times \A^1 \to \A^1$, we get a rational map 
\[
h: \A^1 \times \A^1 \dashrightarrow G; \quad h(s,t) := f_\alpha(st). 
\]
Let us denote by $h_s$ the rational map $\A^1 \dashrightarrow G$ given by $t \mapsto h(s,t)$.  Then $h_0$ and $h_1$ are $R$-equivalent as elements of $G(k(t))$ and $h_0$ lies in the image of the map $\pi_0^{\A^1}(G)(k) \to \pi_0^{\A^1}(G)(k(t))$.  This completes the proof.
\end{proof}

\begin{remark}
It is worthwhile to remark that the results \cite[Th\'eor\`eme 5.8, Th\'eor\`eme 7.2]{Gille} that play a crucial role in the proof of isotropic case of Proposition \ref{prop k(t)} also play a crucial role in the proof of the anisotropic case of Theorem \ref{theorem anisotropic R-equivalence}.
\end{remark}

\begin{corollary}
\label{cor contraction}
If $G$ is a semisimple, simply connected algebraic group over a field $k$, then $\pi_0^{\A^1}(G)_{-1} = 0$.
\end{corollary}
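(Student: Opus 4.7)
The plan is to compute $\pi_0^{\A^1}(G)_{-1}(U)$ as the kernel of the unit-section pullback $1^*: \pi_0^{\A^1}(G)(\mathbb{G}_m \times U) \to \pi_0^{\A^1}(G)(U)$ and to show that this map is in fact a bijection for every $U \in Sm_k$. The first step is to use Lemma \ref{lemma pi0 injective} to restrict to generic points of the irreducible components of $U$ and of $\mathbb{G}_m \times U$; a routine diagram chase then reduces the problem to showing that $1^*$ is bijective when $U = \Spec F$ for a finitely generated field extension $F$ of $k$.

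In the field case, I would consider the composition of pullbacks along $\Spec F(t) \xrightarrow{\eta} \mathbb{G}_{m,F} \xrightarrow{p} \Spec F$, where $\eta$ is the generic point:
\[
\pi_0^{\A^1}(G)(F) \xrightarrow{p^*} \pi_0^{\A^1}(G)(\mathbb{G}_{m,F}) \xrightarrow{\eta^*} \pi_0^{\A^1}(G)(F(t)).
\]
The arrow $\eta^*$ is injective by Lemma \ref{lemma pi0 injective}. The composition $\eta^* \circ p^*$ is pullback along the natural inclusion $F \hookrightarrow F(t)$, which is a bijection by Proposition \ref{prop k(t)}. A short diagram chase then forces $p^*$ to be a bijection: it is injective as the first factor of an injection, and for surjectivity, any $\beta \in \pi_0^{\A^1}(G)(\mathbb{G}_{m,F})$ satisfies $\eta^*(\beta) = \eta^*(p^*(\alpha))$ for some $\alpha \in \pi_0^{\A^1}(G)(F)$ (by surjectivity of the composition), and the injectivity of $\eta^*$ then yields $\beta = p^*(\alpha)$. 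Since $1^* \circ p^* = \mathrm{id}_{\pi_0^{\A^1}(G)(F)}$, the map $1^*$ is the inverse of $p^*$, and in particular has trivial kernel.

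The main point of care --- and thus the main, though rather mild, obstacle --- is the application of Proposition \ref{prop k(t)} over a possibly transcendental base $F$ rather than over $k$ itself. This is legitimate since the proof of Proposition \ref{prop k(t)} rests on Theorem \ref{theorem anisotropic R-equivalence} and its isotropic counterpart \eqref{eq isotropic R-eqivalence}, both of which are already formulated uniformly over arbitrary field extensions; the same argument therefore yields $\pi_0^{\A^1}(G)(F) \xrightarrow{\sim} \pi_0^{\A^1}(G)(F(t))$ for every $F/k$, which is all that is needed.
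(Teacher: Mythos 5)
Your argument is correct and its core is the same as the paper's: reduce to fields of functions via the injectivity of restrictions to dense opens (Lemma \ref{lemma pi0 injective}, i.e. Choudhury's result), then invoke Proposition \ref{prop k(t)}. The modest differences are worth noting. The paper first reduces to absolutely almost simple $G$ and then splits into cases: for $G$ isotropic it simply cites \cite[Corollary 4.3.6]{AHW2}, and only for $G$ anisotropic does it use Proposition \ref{prop k(t)}. You instead prove directly that the projection $p\colon\mathbb G_{m,F}\to\Spec F$ induces a bijection on $\pi_0^{\A^1}(G)$-sections, which treats both cases uniformly through Proposition \ref{prop k(t)} and so dispenses with the explicit reliance on AHW2. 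You are also more careful than the paper's phrase ``immediate from Proposition \ref{prop k(t)}'' in that you flag the need to apply Proposition \ref{prop k(t)} with $F=k(U)$ rather than $k$; your justification is the right one (Theorem \ref{theorem anisotropic R-equivalence} is already stated over arbitrary $F/k$, and the isotropic identifications \eqref{eq isotropic R-eqivalence} from AHW2 and Gille's Th\'eor\`eme 5.8/7.2 are uniform over fields), although one should be alert to the fact that the dichotomy isotropic/anisotropic for $G_F$ may differ from that for $G$, so the two branches of the argument underlying Proposition \ref{prop k(t)} may be swapped when passing to $F$; this causes no harm because Theorem \ref{theorem anisotropic R-equivalence} (for $G$ anisotropic over $k$) already applies to all $F/k$ regardless of whether $G_F$ stays anisotropic.
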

\begin{proof}
We reduce to the case that $G$ is absolutely almost simple as above.  The result in the case where $G$ is isotropic can be found in \cite[Corollary 4.3.6]{AHW2}.  By \cite[Corollary 4.13]{Choudhury}, for any irreducible smooth scheme $X$ over $k$, the natural map $\pi_0^{\A^1}(G)(X) \to \pi_0^{\A^1}(G)(k(X))$ is injective.  The proof in the anisotropic case is immediate from Proposition \ref{prop k(t)}, since we have
\[
\pi_0^{\A^1}(G)_{-1}(U) = \ker \left(\pi_0^{\A^1}(G)(U \times \#G_m) \xrightarrow{(id,1)^*} \pi_0^{\A^1}(G)(U)\right),
\]
for any smooth $k$-scheme $U$ by definition.
\end{proof}

\subsection{Classifying spaces of reductive groups} \hfill
\label{subsection BG}

In this section, we briefly review the results of Elmanto, Kulkarni and Wendt \cite{EKW} that will be used in Section \ref{section strong}.  Let $Sch_k$ denote the site of schemes of finite type over $k$ with \'etale topology.  Let $\mathbf{R}_{\text{\'et}}$ denote the fibrant replacement functor for the \v{C}ech \'etale-local injective model structure.  The inclusion functor $i: Sm_k \to Sch_k$ induces a restriction functor $i^*$ from the category of simplicial \'etale sheaves on $Sch_k$ to the category of simplicial Nisnevich sheaves on $Sm_k$. 

Let $G$ be a reductive algebraic group over a field $k$.  We will denote by $BG$ the pointed simplicial sheaf whose $n$-simplices are $G^n$ with usual face and degeneracy maps.  Let $G$ also denote the underlying \'etale sheaf of groups on $Sch_k$. Then Morel and Voevodsky (see \cite[Page 130]{Morel-Voevodsky}) define $B_{\text{\'et}}G$ to be the simplicial Nisnevich sheaf defined by 
\[
B_{\text{\'et}}G := i^* \circ \mathbf{R}_{\text{\'et}}(BG). 
\]
The sheaf $\mathcal H^1_{\text{\'et}}(G)$ is defined to be the Nisnevich sheafification of the presheaf $U \mapsto H^1_{\text{\'et}}(U,G)$ on $Sm_k$.  We will freely use the identification $\mathcal H^1_{\text{\'et}}(G) = \pi_0^{\A^1}(B_{\text{\'et}} G)$ obtained in \cite[Theorem 1.5]{EKW}. 

We now discuss a variant of a result due to Elmanto, Kulkarni and Wendt \cite[Theorem 1.2, Proposition 3.7]{EKW}, which will play a key role in the proof of our main theorem.  We note below that under the assumption of simple connectedness, \cite[Proposition 3.7]{EKW} holds over an arbitrary field.  

\begin{theorem}
\label{theorem EKW}
Let $G$ be a semisimple, simply connected algebraic group over a field $k$.  Then the sheaf $\mathcal H^1_{\text{\'et}}(G)$ is $\A^1$-invariant.
\end{theorem}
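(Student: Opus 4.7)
The plan is to test the $\A^1$-invariance of the Nisnevich sheaf $\mathcal H^1_{\text{\'et}}(G)$ on its stalks. For each essentially smooth henselian local $k$-algebra $R$, the goal is to show that the natural map
\[
H^1_{\text{\'et}}(R, G) \longrightarrow \mathcal H^1_{\text{\'et}}(G)(R[t])
\]
is a bijection, noting that the left-hand side equals $\mathcal H^1_{\text{\'et}}(G)(R)$ because $\Spec R$ is henselian local. Injectivity is formal, via the retraction $R \to R[t] \to R$ given by $t \mapsto 0$.

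For surjectivity, the crucial geometric input is the classical theorem of Harder, extended to positive characteristic by Raghunathan--Ramanathan, asserting that for any field $F$ and any semisimple, simply connected algebraic group $G$ over $F$, every \'etale $G$-torsor on $\A^1_F$ is extended from $\Spec F$. Combined with the Grothendieck--Serre conjecture for semisimple simply connected groups over regular local rings (known in this range through the work of Fedorov--Panin and Panin), this lets one lift an \'etale $G$-torsor on $\Spec R[t]$, after a suitable Nisnevich refinement, from the generic fiber all the way to $\Spec R$. The overall structure is then expected to closely follow the argument of \cite[Proposition 3.7]{EKW}; the new content needed to remove the infinite-field hypothesis in that result is precisely the uniform, field-by-field form of Harder's theorem above, which is available thanks to $G$ being simply connected.

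The main obstacle in transplanting the \cite{EKW} argument to our setting is the patching step when the base field $k$ is finite: the proof in \cite{EKW} invokes a Gabber-type presentation lemma that implicitly requires infinite residue fields. Simple connectedness provides a clean way around this obstruction: by the Lang--Steinberg theorem, every semisimple, simply connected algebraic group over a finite field is quasi-split and therefore isotropic, so one may invoke the affine homotopy invariance results for isotropic groups over finite fields from \cite[Theorem 2.4]{AHW3}, together with the identification $\mathcal H^1_{\text{\'et}}(G) = \pi_0^{\A^1}(B_{\text{\'et}}G)$ recalled above, to conclude the argument uniformly over all base fields.
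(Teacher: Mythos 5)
Your high-level framework (reduce to the affine line over field extensions of $k$ via Grothendieck--Serre, then invoke Harder and Raghunathan--Ramanathan on $\A^1_F$) agrees with the paper's argument, which also follows \cite[Proposition 3.7]{EKW}.  However, you misidentify where the hypothesis of simple connectedness actually enters, and your proposed fix has a genuine gap.

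The paper's modification of the EKW argument is not about finite residue fields or a Gabber-type presentation lemma.  The finite-field issue, to the extent it arises (through the Grothendieck--Serre property), is already handled simply by citing Panin alongside Fedorov--Panin, not by any isotropy argument.  The real point is about \emph{imperfect} fields: Raghunathan--Ramanathan's Theorem 1.1 only identifies $H^1_{\text{\'et}}(F,G)$ with the kernel of $H^1_{\text{\'et}}(\A^1_F,G)\to H^1_{\text{\'et}}(\A^1_{F_{\rm sep}},G)$, so one still needs to show the target group vanishes.  For that, one uses Harder's Satz 3.3 (generically trivial torsors on smooth affine curves are trivial) and then the vanishing of $H^1_{\text{\'et}}(F_{\rm sep}(t),G)$.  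When $F$ is imperfect, $F_{\rm sep}(t)$ is a cohomological-dimension-one field that is \emph{not} perfect, so Steinberg's theorem (which EKW cite and which requires perfectness) does not apply.  The paper's point is that for $G$ simply connected one may instead use Borel--Springer \cite[8.6]{Borel-Springer-II}, which covers arbitrary $\mathrm{cd}\le 1$ fields.  This is the single, decisive use of simple connectedness, and your proposal does not mention it; you instead state "every \'etale $G$-torsor on $\A^1_F$ is extended from $\Spec F$ for any field $F$" as a black box, which is exactly the thing that has to be proved here.

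Your detour through Lang--Steinberg and \cite[Theorem 2.4]{AHW3} is also problematic.  The Asok--Hoyois--Wendt results give affine homotopy invariance of \emph{Nisnevich}-locally trivial $G$-torsors, whereas $\mathcal H^1_{\text{\'et}}(G)$ and $\pi_0^{\A^1}(B_{\text{\'et}}G)$ classify \emph{\'etale}-locally trivial torsors; these differ whenever $G$-torsors over fields need not be trivial.  So even granting that $G$ is isotropic over a finite base field by Lang's theorem, the AHW statement does not directly bear on the $\A^1$-invariance of $\mathcal H^1_{\text{\'et}}(G)$.  In short: the obstacle you identify (finite fields) is not the relevant one, the obstacle the paper actually addresses (imperfect separably closed fields, via Borel--Springer in place of Steinberg) is missing from your argument, and the proposed workaround conflates Nisnevich and \'etale torsor theory.
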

\begin{proof}
The proof of \cite[Proposition 3.7]{EKW} works almost verbatim (with the use of \cite{Borel-Springer-II} in place of \cite{Steinberg}), so we only give an outline of the argument for the convenience of readers.  

By the strong Grothendieck-Serre property over a field (see \cite[Corollary 1]{Fedorov-Panin} and \cite[Corollary 1.2]{Panin}) along with \cite[Proposition 3.4]{EKW}, we are reduced to showing that the map $H^1_{\text{\'et}}(F, G) \to H^1_{\text{\'et}}(\A^1_F, G)$ induced by the projection map $\A^1_F \to \Spec(F)$ is bijective for any field extension $F$ of $k$.  Fix a separable closure $F_{\rm sep}$ of $F$.  By \cite[Theorem 1.1]{Raghunathan-Ramanathan} (restated as \cite[Theorem 3.6]{EKW}), there is a bijection
\[
H^1_{\text{\'et}}(F, G) \to \ker \left( H^1_{\text{\'et}}(\A^1_F, G) \to  H^1_{\text{\'et}}(\A^1_{F_{\rm sep}}, G)\right).
\]
It follows from \cite[Satz 3.3]{Harder-1967} that generically trivial $G$-torsors on smooth affine curves are trivial.  We now apply \cite[8.6]{Borel-Springer-II} to conclude that $H^1_{\text{\'et}}(\Spec(F_{\rm sep}(t)), G) = 0$, and consequently, every $G$-torsor on $\A^1_{F_{\rm sep}}$ is trivial. 
\end{proof}

\section{Semisimple, simply connected algebraic groups over a perfect field}
\label{section simply connected}

The aim of this section is to show that the sheaf of $\A^1$-connected components of a semisimple, simply connected algebraic group over a perfect field is birational.  The relationship between the sections of the sheaf of $\A^1$-connected components over a discrete valuation ring and its quotient field is tantamount to the understanding of the birationality property.  We treat the cases of isotropic and anisotropic groups separately below.

\begin{lemma}
\label{lemma dvr isotropic}
Let $G$ be an isotropic, semisimple, simply connected, absolutely almost simple algebraic group over a field $k$.  Let $R$ be a discrete valuation ring with quotient field $K$ and residue field $k$ such that $R$ is a $k$-algebra.  Then the restriction map $\pi_0^{\A^1}(G)(R) \to \pi_0^{\A^1}(G)(K)$ is an isomorphism.
\end{lemma}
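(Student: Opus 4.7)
The plan is to prove injectivity and surjectivity of the restriction map separately.  Injectivity follows immediately from Lemma \ref{lemma pi0 injective}, since $\Spec K$ is a dense open subscheme of $\Spec R$ (interpreting the lemma for the essentially smooth scheme $\Spec R$ via the standard limiting argument on smooth models of $R$).

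For surjectivity, I will rely on three ingredients.  The first is the isotropy assumption, which guarantees via the work of Asok, Hoyois and Wendt \cite{AHW2, AHW3} that $\Sing G$ is $\A^1$-local, so that for any smooth henselian local $k$-algebra $S$ the set $\pi_0^{\A^1}(G)(S)$ can be identified with the naive $\A^1$-homotopy classes $G(S)/\!\sim$, and in particular the canonical map $G(S) \to \pi_0^{\A^1}(G)(S)$ is surjective.  The second is Corollary \ref{cor R-equivalence}, which identifies $\pi_0^{\A^1}(G)(F)$ with $G(F)/R$ for every field extension $F/k$.  The third is Gille's \cite[Lemme 4.5]{Gille}, already invoked in the proof of Theorem \ref{theorem anisotropic R-equivalence}: over the henselian DVR $R^h$ with fraction field $K^h$, isotropy of $G$ over $k \subseteq K^h$ yields $G(K^h) = G(K^h)^+ \cdot G(R^h)$.

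Given $\alpha \in \pi_0^{\A^1}(G)(K)$, I would represent $\alpha$ by some $g \in G(K)$, pass to the henselization, and write the image of $g$ in $G(K^h)$ as $g = u v$ with $u \in G(K^h)^+$ and $v \in G(R^h)$.  Since elements of $G(K^h)^+$ are $R$-trivial, $g$ and $v$ represent the same class in $G(K^h)/R = \pi_0^{\A^1}(G)(K^h)$, so that the image of $\alpha$ in $\pi_0^{\A^1}(G)(K^h)$ lifts to the class of $v$ in $\pi_0^{\A^1}(G)(R^h)$.  When $R$ is already henselian, this directly provides the required lift in $\pi_0^{\A^1}(G)(R)$.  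For a general DVR $R$, I would write $\pi_0^{\A^1}(G)(R^h)$ as the filtered colimit of $\pi_0^{\A^1}(G)(V)$ over Nisnevich neighborhoods $V$ of the closed point of $\Spec R$, spread $[v]$ to a class $[v_0] \in \pi_0^{\A^1}(G)(V_0)$ for some such $V_0$, and then glue $[v_0]$ and $\alpha$ to a section over $\Spec R$ using the Nisnevich sheaf property applied to the Nisnevich cover $\{V_0 \to \Spec R, \, \Spec K \to \Spec R\}$.

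The main obstacle will be verifying the compatibility of $[v_0]$ and $\alpha$ on the overlap $V_0 \times_R \Spec K$, which decomposes as a disjoint union of spectra of finite separable extensions of $K$.  This has to be checked factor by factor by restricting further to the henselizations of these factors, where the two classes agree by construction because both come from the same element $v \in G(R^h) \subseteq G(K^h)$; the passage from agreement after further henselization back to agreement on each factor will use the injectivity of restriction maps guaranteed by Lemma \ref{lemma pi0 injective} applied to suitable dense open inclusions among essentially smooth schemes.
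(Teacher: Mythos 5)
Your overall strategy mirrors the paper's: reduce to the henselization $R^h$ via Gille's Lemme 4.5 (the paper uses the same lemma to conclude that $\pi_0^{\A^1}(G)(R^h) \to \pi_0^{\A^1}(G)(K^h)$ is an isomorphism), and then descend from $R^h$ back to $R$ using the Nisnevich sheaf property. Your treatment of the henselian case is fine. The descent step, however, has two genuine gaps.

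First, the gluing over the Nisnevich cover $\{V_0 \to \Spec R,\ \Spec K \to \Spec R\}$ requires not only the compatibility you discuss on $V_0 \times_R \Spec K$, but also the cocycle condition on the self-intersection $V_0 \times_R V_0$, which is nontrivial because $V_0 \to \Spec R$ is an \'etale map with non-trivial fibers, not an open immersion. You never address it, and there is no reason for a spread-out section $v_0 \in G(V_0)$ to satisfy $\mathrm{pr}_1^*[v_0] = \mathrm{pr}_2^*[v_0]$ a priori. This is exactly where the paper's diagram chase earns its keep: since $\alpha$ comes from $\Spec K$, the two pullbacks of $\beta$ agree after restricting to $\Spec(K^h \otimes_K K^h)$, and Lemma~\ref{lemma pi0 injective} applied to the \emph{dense open immersion} $\Spec(K^h \otimes_K K^h) \hookrightarrow \Spec(R^h \otimes_R R^h)$ upgrades this to the cocycle condition over $R^h \otimes_R R^h$. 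Second, on the overlap $V_0 \times_R \Spec K = \Spec K_0$, you propose to pass from agreement over $K^h$ back to agreement over $K_0$ via ``injectivity from Lemma~\ref{lemma pi0 injective} applied to suitable dense open inclusions.'' But $\Spec K^h \to \Spec K_0$ is a pro-(finite \'etale) extension of fields, not a dense open immersion, so Lemma~\ref{lemma pi0 injective} does not give injectivity of $\pi_0^{\A^1}(G)(K_0) \to \pi_0^{\A^1}(G)(K^h)$, and indeed such injectivity along finite separable extensions is not obvious for a Nisnevich sheaf. What makes your step work is not injectivity but the filtered-colimit identity $\pi_0^{\A^1}(G)(K^h) = \dlim_\alpha \pi_0^{\A^1}(G)(K_\alpha)$: agreement in the colimit persists at some finite stage, so one shrinks $V_0$ accordingly. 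The paper sidesteps both issues at once by imposing the compatibility over $K^h$ as part of the descent datum for the cover $\{\Spec R^h, \Spec K\}$ rather than attempting a pointwise check on a finite-stage overlap.
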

\begin{proof}
Let $R^h$ denote the henselization of $R$.  Let $K^h$ denote the quotient field of $R^h$.  By \cite[Lemme 4.5]{Gille}, 
\[
G(K^h) = G(K^h)^+ \cdot G(R^h), 
\]
where $G(K^h)^+$ denotes the subgroup of $G(K^h)$ generated by unipotent elements.  This implies that the natural map $\pi_0^{\A^1}(G)(R^h) \to \pi_0^{\A^1}(G)(K^h)$ is an isomorphism.  We now have a diagram 
\[
\begin{xymatrix}{
\pi_0^{\A^1}(G)(R) \ar[r] \ar[d]& \pi_0^{\A^1}(G)(R^h) \ar@<-.5ex>[r] \ar@<.5ex>[r] \ar[d]^-{\simeq}& \pi_0^{\A^1}(G)(R^h \otimes_R R^h) \ar[d]\\ 
\pi_0^{\A^1}(G)(K) \ar[r] & \pi_0^{\A^1}(G)(K^h) \ar@<-.5ex>[r] \ar@<.5ex>[r] & \pi_0^{\A^1}(G)(K^h \otimes_K K^h) }
\end{xymatrix}
\]
in which the rows are equalizer diagrams, all the vertical maps are injective and the middle vertical map is an isomorphism.  Surjectivity of the leftmost vertical map now follows from an easy diagram chase.
\end{proof}

\begin{lemma}
\label{lemma anisotropic} 
Let $G$ be a semisimple algebraic group over a field $k$.  Let $R$ be an essentially smooth local $k$-algebra with quotient field $K$.  If $G_K$ is anisotropic, then the natural map $G(R) \to G(K)$ is an isomorphism.  Moreover, if $R$ is henselian, then the map $\pi_0^{\A^1}(G)(R) \to \pi_0^{\A^1}(G)(K)$ is an isomorphism.
\end{lemma}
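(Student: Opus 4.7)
I would begin with the first assertion $G(R) \to G(K)$ and deduce the second from it by a short diagram chase. Injectivity is immediate from $R \hookrightarrow K$. For surjectivity, my first step is to reduce to the case in which $R$ is a discrete valuation ring: since $R$ is regular local, it is a Krull domain with $R = \bigcap_{\mathfrak p} R_{\mathfrak p}$ over its height-one primes; each $R_{\mathfrak p}$ is an essentially smooth DVR with quotient field $K$, and as $G$ is affine, $G(R) = \bigcap_{\mathfrak p} G(R_{\mathfrak p})$ inside $G(K)$, so the DVR case suffices.

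For the DVR case, my plan is to pass to the henselization $R^h$ with quotient field $K^h$ and invoke Gille's decomposition \cite[Lemme 4.5]{Gille}, $G(K^h) = G(K^h)^+ \cdot G(R^h)$, where $G(K^h)^+$ is the subgroup generated by $K^h$-points of unipotent radicals of parabolic $K^h$-subgroups of $G$. Since $G_K$ is anisotropic, $G_{K^h}$ admits no proper parabolic $K^h$-subgroup, and hence no $K^h$-subgroup isomorphic to $\mathbb G_a$; thus $G(K^h)^+ = 1$ and $G(K^h) = G(R^h)$. Combined with the identity $R^h \cap K = R$ inside $K^h$ and the affineness of $G$, intersecting with $G(K)$ inside $G(K^h)$ yields $G(R) = G(K)$.

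For the second assertion, the first part gives $G(R) \cong G(K)$, and the canonical map $G(K) \twoheadrightarrow \pi_0^{\A^1}(G)(K)$ is surjective since $K$ is a field. The commutative square
\[
\begin{xymatrix}{
G(R) \ar[r]^-{\sim} \ar[d] & G(K) \ar@{->>}[d] \\
\pi_0^{\A^1}(G)(R) \ar[r] & \pi_0^{\A^1}(G)(K)}
\end{xymatrix}
\]
then forces the bottom arrow to be surjective. Injectivity follows from Lemma \ref{lemma pi0 injective}, extended to the essentially smooth local scheme $\Spec(R)$ by writing it as a filtered limit of smooth $k$-schemes along \'etale affine transition maps, applied to the dense inclusion $\Spec(K) \hookrightarrow \Spec(R)$.

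The main obstacle I expect to encounter is justifying that the passage $K \rightsquigarrow K^h$ preserves anisotropy of $G$, since henselization can in principle split off new parabolic subgroups; this subtlety is precisely what forces the henselianness hypothesis in the second, stronger assertion, and for the applications to birationality via Lemma \ref{lemma henselian reduction} one has $R$ henselian from the outset, making $K = K^h$ and allowing Gille's decomposition to be applied directly to the pair $(R, K)$.
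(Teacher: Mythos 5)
Your reduction to the DVR case---writing $R = \bigcap_{\mathfrak p} R_{\mathfrak p}$ over the height-one primes and using affineness of $G$---coincides with the paper's argument, which invokes Matsumura's Theorem 38 for exactly this. Your closing diagram chase for the $\pi_0^{\A^1}$ statement is also essentially the one in the paper. The divergence is entirely in the DVR case: the paper cites \cite[Proposition A.3]{CTHHKPS} directly, whereas you propose to reprove it by henselizing and invoking \cite[Lemme 4.5]{Gille}, under the claim that $G_{K^h}$ is anisotropic.

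That claim is the gap you flag at the end, and it is genuine; the implication simply fails. By Hensel's lemma $K^h$ contains the residue field $\kappa$ of $R$, and $\kappa$ may split $G$. Concretely, take $k = \mathbb R$, $G = SL_1(D)$ with $D$ the Hamilton quaternions, and $R = \mathbb R[t]_{(t^2+1)}$: then $G_K$ is anisotropic over $K = \mathbb R(t)$, but $\kappa \cong \mathbb C$, so $K^h \supset \mathbb C$ and $G_{K^h} \cong SL_2$ is isotropic. This example is in fact more serious than a gap in your proof: $g = (1 + ti + t^2j + tk)/(t^2+1)$ has reduced norm $(1 + 2t^2 + t^4)/(t^2+1)^2 = 1$ while every coordinate has a pole along $(t^2+1)$, so $g \in G(K) \setminus G(R)$ and the first assertion fails as literally stated. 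This suggests \cite[Proposition A.3]{CTHHKPS} carries a further hypothesis (most plausibly anisotropy of $G$ over the residue field), which does hold automatically in the only place the lemma is used, namely the proof of Theorem \ref{theorem birational}, where one has first reduced to residue field $k$; you should check the exact hypotheses of that proposition before relying on this lemma. Finally, your proposed escape does not work: even if the ambient $R$ is henselian, its localizations $R_{\mathfrak p}$ at non-maximal primes are generally not henselian, and Gille's decomposition is a statement about henselian discrete valuation rings, so after your reduction to DVRs the obstruction reappears in exactly the cases where it matters.
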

\begin{proof}
First assume that $R$ is a discrete valuation ring.  Since $G$ is anisotropic, we have $G(R) = G(K)$ by \cite[Proposition A.3]{CTHHKPS}.  

In the general case, write $S = \Spec(R)$ and let $\alpha \in G(K)$.  For every $x \in S^{(1)}$, there exists a unique $\alpha_x \in G(\mathcal O_{S,x})$ such that $\alpha_x|_K = \alpha$.  Therefore, there exists an open subset $U$ of $S$ with $\codim(S\setminus U) \geq 2$ such that $\alpha$ lies in the image of the natural map $G(U) \to G(K)$.  This gives a rational map $\~\alpha: S \dashrightarrow G$, which is defined on all the codimension $1$ points of $S$.  But since $G$ is affine, the rational map $\~\alpha$ must be a morphism.  This can be seen by embedding $G$ into an affine space and reducing to the statement that any rational map from a normal variety to $\A^1$ defined at codimension $1$ points is a morphism.  The latter statement easily follows from \cite[Theorem 38]{Matsumura-CommAlg}, which says that any normal domain is the intersection of its localizations at height $1$ prime ideals.  Thus, we get an element $\~\alpha \in G(S)$ extending $\alpha \in G(K)$.

Assume now that $R$ is moreover henselian.  Injectivity of the map $\pi_0^{\A^1}(G)(R) \to \pi_0^{\A^1}(G)(K)$ follows from Lemma \ref{lemma pi0 injective}.  Surjectivity of the map $\pi_0^{\A^1}(G)(R) \to \pi_0^{\A^1}(G)(K)$ follows from the surjectivity of the maps $G(R) \to \pi_0^{\A^1}(G)(R)$ and $G(K) \to \pi_0^{\A^1}(G)(K)$.
\end{proof}

\begin{remark}
Statements related to those in Lemma \ref{lemma dvr isotropic} and Lemma \ref{lemma anisotropic} from the perspective of $R$-equivalence on group schemes have been studied in \cite[Section 8]{Gille-Stavrova}.  An independent proof of Lemma \ref{lemma dvr isotropic} can be given using \cite[Proposition 8.5]{Gille-Stavrova} in view of \eqref{eq isotropic R-eqivalence}.
\end{remark}

Another ingredient required for the proof of birationality of $\A^1$-connected components of a semisimple, simply connected group is the preservation of the birationality property under Weil restriction.

\begin{lemma}
\label{lemma weil restriction}
Let $L/k$ be a finite separable extension of fields and let $G$ be an algebraic group over $L$.  If $\pi_0^{\A^1}(G)$ is a birational sheaf, then so is $\pi_0^{\A^1}(R_{L/k}(G))$, where $R_{L/k}$ denotes the Weil restriction functor.
\end{lemma}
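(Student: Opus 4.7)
The plan is to verify the two hypotheses of Lemma \ref{lemma henselian reduction} for the sheaf $\mathcal F = \pi_0^{\A^1}(R_{L/k}(G))$. Since $R_{L/k}(G)$ is an algebraic group over $k$, hence a sheaf of groups on $Sm_k$, the $\A^1$-invariance of $\mathcal F$ is immediate from \cite{Choudhury}; so the content is to show that for every smooth henselian local $k$-algebra $R$ with quotient field $K$, the restriction $\mathcal F(R) \to \mathcal F(K)$ is an isomorphism.

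The central step will be to establish a canonical isomorphism of Nisnevich sheaves on $Sm_k$
\[
\pi_0^{\A^1}(R_{L/k}(G)) \xrightarrow{\;\sim\;} R_{L/k}\bigl(\pi_0^{\A^1}(G)\bigr),
\]
where $R_{L/k}(\pi_0^{\A^1}(G))$ is the presheaf $U \mapsto \pi_0^{\A^1}(G)(U_L)$ on $Sm_k$. Because $L/k$ is finite étale, $U_L$ is smooth over $L$ for every $U \in Sm_k$ and the base change $U \mapsto U_L$ preserves Nisnevich covers, so the presheaf is already a Nisnevich sheaf; the identity $(U \times_k \A^1_k)_L = U_L \times_L \A^1_L$ further shows it is $\A^1$-invariant. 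Consequently the morphism $R_{L/k}(G) \to R_{L/k}(\pi_0^{\A^1}(G))$ obtained by applying $R_{L/k}$ to $G \to \pi_0^{\A^1}(G)$ factors through $\pi_0^{\A^1}(R_{L/k}(G))$, producing a natural morphism $\phi$. To see $\phi$ is an isomorphism, I would check stalks: for any smooth henselian local $k$-algebra $\mathcal O$, the finite étale $\mathcal O$-algebra $\mathcal O_L$ splits as a product $\prod_i \mathcal O_i$ of smooth henselian local $L$-algebras, so both sides are presented as quotients of $R_{L/k}(G)(\mathcal O) = G(\mathcal O_L) = \prod_i G(\mathcal O_i)$. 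The defining $\A^1$-chain homotopy relations on the two sides coincide, since $R_{L/k}(G)$-valued homotopies on $\mathcal O \times_k \A^1_k$ are precisely $G$-valued homotopies on $\mathcal O_L \times_L \A^1_L$, which factor componentwise as homotopies on each $\mathcal O_i \times_L \A^1_L$.

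Granting the isomorphism $\phi$, the conclusion is a straightforward unwinding. Writing $R_L = \prod_i R_i$ with each $R_i$ a smooth henselian local $L$-algebra with quotient field $K_i$, and $K_L = \prod_i K_i$, we obtain
\[
\mathcal F(R) = \prod_i \pi_0^{\A^1}(G)(R_i), \qquad \mathcal F(K) = \prod_i \pi_0^{\A^1}(G)(K_i).
\]
A standard colimit argument, writing $\Spec R_i$ as the cofiltered limit of Nisnevich neighbourhoods of its closed point and $\Spec K_i$ as the limit of their dense opens, combined with the birationality of $\pi_0^{\A^1}(G)$, shows that each $\pi_0^{\A^1}(G)(R_i) \to \pi_0^{\A^1}(G)(K_i)$ is a bijection, which completes the verification.

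The main technical obstacle is the rigorous identification of $\phi$ as an isomorphism; in particular, ensuring that the iterated sheafifications implicit in $\pi_0^{\A^1}$ on both sides reduce, over smooth henselian local rings, to the same explicit $\A^1$-chain homotopy equivalence. The key observation making this work is that Weil restriction along the finite étale extension $L/k$ is compatible with the two basic constituents of $\pi_0^{\A^1}$, namely Nisnevich sheafification (which passes through because base change preserves Nisnevich covers) and the $\A^1$-singular construction (via the identity $(- \times_k \A^1_k)_L = (-)_L \times_L \A^1_L$).
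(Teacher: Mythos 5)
Your strategy---to establish a natural isomorphism $\phi\colon \pi_0^{\A^1}(R_{L/k}(G)) \xrightarrow{\sim} R_{L/k}(\pi_0^{\A^1}(G))$ and then deduce birationality of the left side from that of $\pi_0^{\A^1}(G)$ on $Sm_L$---is conceptually attractive and genuinely different from the paper's argument, which never constructs $\phi$ as a sheaf isomorphism. However, the step where you claim $\phi$ is an isomorphism has a real gap: the stalkwise argument asserts that both $\pi_0^{\A^1}(R_{L/k}(G))(\mathcal O)$ and $\prod_i \pi_0^{\A^1}(G)(\mathcal O_i)$ are quotients of $G(\mathcal O_L) = \prod_i G(\mathcal O_i)$ by ``the defining $\A^1$-chain homotopy relations,'' and that these relations coincide. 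But $\pi_0^{\A^1}$ is \emph{not} the naive $\A^1$-chain-homotopy quotient of sections over a henselian local ring; it is obtained by a transfinite iteration of the singular construction and Nisnevich sheafification, and the resulting equivalence relation (``ghost homotopy'' in the terminology of Balwe--Hogadi--Sawant) is a priori much coarser. That naive chain homotopy does compute $\pi_0^{\A^1}$ for the groups at hand is precisely the content of the $R$-equivalence theorem (Theorem \ref{theorem anisotropic R-equivalence} and Corollary \ref{cor R-equivalence}), which is a nontrivial result and not a formal consequence of the definitions. You flag this as a ``technical obstacle'' but do not close it.

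The paper's proof sidesteps the construction of $\phi$ entirely. After reducing to $X$ henselian with $U = \Spec k(X)$, it lifts a section of $\pi_0^{\A^1}(R_{L/k}(G))(U)$ to $R_{L/k}(G)(U) = G(U_L)$, uses birationality of $\pi_0^{\A^1}(G)$ to find $\beta \in G(X_L)$ with the same image over $U_L$, and then crucially invokes the identification of $\pi_0^{\A^1}(G)$ over fields with $R$-equivalence classes. The $R$-equivalence witnessing $\beta|_{U_L} \sim \~\alpha$ is a rational map $\P^1 \dashrightarrow G$, and the adjunction defining Weil restriction transports it to a rational map $\P^1 \dashrightarrow R_{L/k}(G)$, so the two sections agree in $\pi_0^{\A^1}(R_{L/k}(G))(U)$. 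Note that if you tried to complete your argument, you would find that injectivity of $\phi$ on sections over fields already requires exactly this identification of $\pi_0^{\A^1}$ with $R$-equivalence classes---at which point your approach effectively folds back into the paper's, but with the added burden of controlling $\phi$ on non-field henselian sections. So the paper's more hands-on route is both shorter and avoids unproven compatibilities of $\pi_0^{\A^1}$ with Weil restriction.
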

\begin{proof}
Let $X$ be a smooth scheme over $k$ and let $U$ be a dense open subscheme of $X$.  Without loss of generality we may assume that $X$ is henselian and $U = \Spec k(X)$.  We then have $U_L = \Spec K(X)\otimes_k L = \Spec L(X_L)$. We need to show that the restriction map $\pi_0^{\A^1}(R_{L/k}(G))(X) \to \pi_0^{\A^1}(R_{L/k}(G))(U)$ is a bijection.  It remains to show surjectivity of this map as this map is known to be injective (see \cite[Corollary 4.17]{Choudhury} or \cite[Lemma 3.45]{Morel-book}).  

Let $\alpha \in \pi_0^{\A^1}(R_{L/k}(G))(U)$.  Since $R_{L/k}(G) \to \pi_0^{\A^1}(R_{L/k}(G))$ is an epimorphism of Nisnevich sheaves, the element $\alpha$ is the image of an element $\~\alpha \in R_{L/k}(G)(U) = G(U_L)$.  Since $\pi_0^{\A^1}(G)$ is a birational sheaf, there exists $\beta \in G(X_L)$ such that $\beta|_{U_L}$ and $\~\alpha$ map to the same element in $\pi_0^{\A^1}(G)(U_L)$.  We will identify sections of $\pi_0^{\A^1}(G)$ over fields with $R$-equivalence classes, using \cite[Main Theorem]{Balwe-Sawant-R-eqivalence-IMRN} and \cite[Theorem 3.6]{Balwe-Sawant-reductive}.  Hence, $\beta|_{U_L}$ and $\~\alpha$ are $R$-equivalent.  So there exists a rational map $h: \P^1_L \dashrightarrow G$ defined at $0$ and $1$ such that $h(0) = \beta|_{U_L}$ and $h(1) = \~\alpha$.  This corresponds to a rational map $h': \P^1 \to R_{L/k}(G)$ connecting $\alpha$ to the restriction of the element in $R_{L/k}(G)(X)$ corresponding to $\beta \in G(X_L)$ by $R$-equivalence.  Consequently, the restriction map $\pi_0^{\A^1}(R_{L/k}(G))(X) \to \pi_0^{\A^1}(R_{L/k}(G))(U)$ is surjective.
\end{proof}

We are now set to prove the main theorem of this section.

\begin{theorem}
\label{theorem birational}
If $G$ is a semisimple, simply connected algebraic group over a perfect field $k$, then $\pi_0^{\A^1}(G)$ is a birational sheaf. 
\end{theorem}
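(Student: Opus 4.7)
The plan is to verify the criterion of Lemma~\ref{lemma henselian reduction}. Since $\pi_0^{\A^1}(G)$ is $\A^1$-invariant by Choudhury's theorem and Lemma~\ref{lemma pi0 injective} already yields injectivity of the restriction $\pi_0^{\A^1}(G)(R) \to \pi_0^{\A^1}(G)(K)$ for every smooth henselian local $k$-algebra $R$ with fraction field $K$, it suffices to prove surjectivity of this map. First I would reduce to the case where $G$ is absolutely almost simple: any simply connected semisimple $G$ over the perfect field $k$ decomposes as a finite product $\prod_i R_{L_i/k}(G_i)$ of Weil restrictions of absolutely almost simple simply connected groups $G_i$ along finite separable extensions $L_i/k$, so by Lemma~\ref{lemma weil restriction} it suffices to treat each $G_i$ individually. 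Perfectness of $k$ further ensures that the residue field $\kappa$ of $R$ is separable over $k$, and hence $R$ admits a coefficient field embedding $\kappa \hookrightarrow R$ lifting the residue map; this is precisely the step at which the hypothesis on $k$ is genuinely used.

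Assuming $G$ is absolutely almost simple, the argument splits into two cases according to the isotropy of $G_K$. If $G_K$ is anisotropic, Lemma~\ref{lemma anisotropic} applies directly and gives that $\pi_0^{\A^1}(G)(R) \to \pi_0^{\A^1}(G)(K)$ is an isomorphism. If $G_K$ is isotropic, then by the local constancy of the isotropy rank of a reductive group scheme over the regular henselian local base $R$, the special fiber $G_\kappa$ is isotropic as well; via the coefficient field $\kappa \hookrightarrow R$, Hensel's lemma lifts a split $\mathbb{G}_m \subset G_\kappa$ to an $R$-split $\mathbb{G}_{m,R} \subset G_R$, so that $G_R$ contains a proper parabolic $R$-subgroup with unipotent radical. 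The strategy is then to establish a henselian-local analogue of Gille's factorization $G(K) = G(K)^+ \cdot G(R)$, where $G(K)^+$ denotes the subgroup generated by the $K$-points of unipotent radicals of proper parabolic $R$-subgroups of $G_R$. Granted this factorization, since these unipotent radicals are iterated extensions of $\mathbb{G}_a$ and therefore $\A^1$-contractible, elements of $G(K)^+$ become $\A^1$-chain homotopic to the identity, and a diagram chase exactly as in the proof of Lemma~\ref{lemma dvr isotropic} yields the required surjectivity.

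The main obstacle I expect is the isotropic case, which demands an extension of Gille's factorization \cite[Lemme 4.5]{Gille} from the discrete valuation ring setting to arbitrary smooth henselian local rings; the latter admits no distinguished valuation to drive the classical argument, and the parabolic subgroups of $G_R$ must be produced by some substitute input. A plausible route is to invoke Bruhat--Tits theory for reductive group schemes over henselian local rings in order to construct enough parabolic $R$-subgroups and carry out the factorization directly. A less direct alternative is to bootstrap from the field-theoretic results by a specialization argument through the coefficient field $\kappa \hookrightarrow R$, combining Theorem~\ref{theorem EKW} on the $\A^1$-invariance of $\mathcal H^1_{\text{\'et}}(G)$ with Corollary~\ref{cor R-equivalence} in order to transfer the identification of $\pi_0^{\A^1}(G)$-sections with $R$-equivalence classes from fields to sections over $R$.
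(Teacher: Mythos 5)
Your reduction steps are sound and match the paper: the product decomposition and Weil restriction argument (via Lemma~\ref{lemma weil restriction}) correctly reduce to the absolutely almost simple case, the coefficient field lift $\kappa \hookrightarrow R$ is precisely where perfectness enters, and your handling of the anisotropic case via Lemma~\ref{lemma anisotropic} is exactly what the paper does.

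The isotropic case, however, is where your proposal has a real gap. Your primary strategy is to prove a henselian-local analogue of Gille's factorization $G(K) = G(K)^+\cdot G(R)$ for an arbitrary smooth henselian local ring $R$, but as you yourself note, the classical argument is driven by the valuation on a DVR and there is no obvious substitute; nothing like a Bruhat--Tits construction of such a factorization over higher-dimensional henselian local bases is available in the literature, and without it your argument does not close. The paper avoids this entirely: in the isotropic case one already knows, by the Asok--Hoyois--Wendt theorem \cite[Theorem 4.3.3]{AHW2}, that $\pi_0^{\A^1}(G')$ is strongly $\A^1$-invariant, and hence in particular unramified. Unramifiedness gives
\[
\pi_0^{\A^1}(G')(S) \;=\; \bigcap_{x\in S^{(1)}}\pi_0^{\A^1}(G')(\mathcal O_{S,x}) \;\subseteq\; \pi_0^{\A^1}(G')(k(S)),
\]
which reduces the surjectivity question to codimension-one local rings, i.e.\ DVRs, where Lemma~\ref{lemma dvr isotropic} (which does rest on Gille's Lemme~4.5 over a henselian DVR) applies directly. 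In other words, the paper uses the already-established strong $\A^1$-invariance in the isotropic case as an input to get birationality, rather than attempting to re-derive a factorization result in a harder setting. Your ``less direct alternative'' invoking Theorem~\ref{theorem EKW} and Corollary~\ref{cor R-equivalence} is not what the paper does either and remains too vague to constitute a proof. The missing idea in your write-up is precisely this use of unramifiedness to reduce from general smooth henselian local rings to discrete valuation rings.
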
 
\begin{proof}
Since $G$ is the direct product of its almost simple factors, it suffices to prove the theorem in the case where $G$ is semisimple, simply connected and almost simple.  By \cite[Chapter 24.a]{Milne-algebraic groups} (see also \cite{Tits-classification}), there exists a finite separable extension $L/k$ and an absolutely almost simple group $G'$ such that $G$ is isomorphic to $R_{L/k}(G')$ as an algebraic group over $k$.  In view of Lemma \ref{lemma weil restriction}, it suffices to prove that $\pi_0^{\A^1}(G')$ is a birational sheaf.

In view of Lemma \ref{lemma henselian reduction}, it suffices to show that for every henselian local $k$-scheme $S = \Spec(R)$, the restriction map $\pi_0^{\A^1}(G')(S) \to \pi_0^{\A^1}(G')(k(S))$ is a surjection (injectivity of this map follows from Lemma \ref{lemma pi0 injective}).  Let $\ell$ denote the residue field of $S$.  Since $k$ is perfect, there exists an inclusion $\ell \hookrightarrow R$ as $k$-algebras.  Hence, $\pi_0^{\A^1}(G')(S) = \pi_0^{\A^1}(G'_\ell)(S)$.  Thus, without loss of generality, we may assume that $\ell = k$.  If $G'$ is anisotropic, then the desired statement is given by Lemma \ref{lemma anisotropic}.

If $G'$ is isotropic, then $\pi_0^{\A^1}(G')$ is strongly $\A^1$-invariant by \cite[Theorem 4.3.3]{AHW2}.  In particular, $\pi_0^{\A^1}(G')$ is an unramified sheaf.  Hence, 
\begin{equation}
\label{eqn unramified}
\pi_0^{\A^1}(G')(S) = \bigcap_{x \in S^{(1)}} \pi_0^{\A^1}(G')(\mathcal O_{S,x}) \subseteq \pi_0^{\A^1}(G')(k(S)).  
\end{equation}
Let $\alpha \in \pi_0^{\A^1}(G')(k(S))$.  We will show that $\alpha$ ``extends to all the codimension $1$ points of $S$''.  Let $x \in S^{(1)}$.  By Lemma \ref{lemma dvr isotropic} applied to $\Spec (\mathcal O_{S,x})$,  there exists $\alpha_x \in \pi_0^{\A^1}(G')(\mathcal O_{S,x})$ such that $\alpha_x|_{k(S)} = \alpha$.  Since the restriction map $\pi_0^{\A^1}(G')(\mathcal O_{S,x}) \to \pi_0^{\A^1}(G')(k(S))$ is injective for every $x \in S^{(1)}$, it follows that $\alpha_x|_{k(S)} = \alpha_y|_{k(S)}$ for all $x, y \in S^{(1)}$.  The surjectivity of $\pi_0^{\A^1}(G')(S) \to \pi_0^{\A^1}(G')(k(S))$ now follows from \eqref{eqn unramified}.
\end{proof}

\begin{corollary}
\label{cor ss sc}
If $G$ is a semisimple, simply connected algebraic group over a perfect field $k$, then $\pi_0^{\A^1}(G)$ is strongly $\A^1$-invariant. 
\end{corollary}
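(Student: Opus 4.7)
The plan is to combine Theorem \ref{theorem birational} with Remark \ref{rmk birationality and strong a1-invariance} and Choudhury's $\A^1$-invariance result for $\pi_0^{\A^1}$ of $H$-spaces mentioned in the introduction. The statement essentially falls out with no extra work once these three ingredients are in hand, so the proof is a one-line assembly rather than a genuine calculation.

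First, I would note that since $G$ is an algebraic group, $\pi_0^{\A^1}(G)$ is a sheaf of groups on $Sm_k$. By Choudhury's theorem (cited in the introduction), this sheaf is $\A^1$-invariant, that is, it satisfies the first half of the hypothesis in Remark \ref{rmk birationality and strong a1-invariance}. Next, by Theorem \ref{theorem birational}, which was just proved, $\pi_0^{\A^1}(G)$ is a birational sheaf, supplying the second half of the hypothesis.

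Finally, I would invoke Remark \ref{rmk birationality and strong a1-invariance}, whose content is that any birational $\A^1$-invariant sheaf of groups automatically has vanishing higher Nisnevich cohomology on each $X\in Sm_k$ (via the flasque-torsor argument) and hence is strongly $\A^1$-invariant in the sense of Definition \ref{defn a1-invariance}. Applying this to $\@F = \pi_0^{\A^1}(G)$ yields the corollary. Since this is purely formal, there is no real obstacle; all of the actual content has been absorbed into Theorem \ref{theorem birational}, whose proof is where the work lies.
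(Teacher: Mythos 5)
Your proposal is correct and matches the paper's own proof exactly: both combine Theorem \ref{theorem birational}, the $\A^1$-invariance of $\pi_0^{\A^1}(G)$ from Choudhury, and Remark \ref{rmk birationality and strong a1-invariance}.
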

\begin{proof}
This is a straightforward consequence of Theorem \ref{theorem birational} and Remark \ref{rmk birationality and strong a1-invariance}, since $\pi_0^{\A^1}(G)$ is an $\A^1$-invariant sheaf by \cite[Theorem 4.18]{Choudhury}.
\end{proof}

\section{Strong \texorpdfstring{$\A^1$}{A1}-invariance of the sheaf of \texorpdfstring{$\A^1$}{A1}-connected components of a reductive group}
\label{section strong}

In this section, we build on the results of the previous section to show that the sheaf of $\A^1$-connected components of any reductive algebraic group over a perfect field is strongly $\A^1$-invariant.  We begin by observing that strong $\A^1$-invariance of $\A^1$-connected components is preserved under isogenies. 

\begin{lemma}
\label{lemma isogeny}
Let $k$ be a field.  Let $\~G \to G$ be a central isogeny of semisimple algebraic groups, the kernel of which is a group $\mu$ of multiplicative type.  If $\pi_0^{\A^1}(\~G)$ is strongly $\A^1$-invariant, then so is $\pi_0^{\A^1}(G)$.
\end{lemma}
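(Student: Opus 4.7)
The central isogeny $\widetilde G \to G$ exhibits $\widetilde G$ as a $\mu$-torsor over $G$ in the \'etale topology. Since $\mu$ is a finite group of multiplicative type, $\pi_0^{\A^1}(\mu) \cong \mu$ is strongly $\A^1$-invariant (via a Kummer-type exact sequence and strong $\A^1$-invariance of $\mathbb{G}_m$). By Morel's theorem on $\A^1$-fiber sequences arising from torsors (\cite[Theorem 6.50]{Morel-book}), this gives an $\A^1$-fiber sequence
\[
\widetilde G \to G \to B_{\text{\'et}}\mu.
\]
Applying $\pi_0^{\A^1}$ and using the identification $\pi_0^{\A^1}(B_{\text{\'et}}\mu) = \mathcal H^1_{\text{\'et}}(\mu)$ from \cite[Theorem 1.5]{EKW} (as recalled in Section \ref{subsection BG}), we obtain an exact sequence of Nisnevich sheaves of groups
\[
\pi_0^{\A^1}(\widetilde G) \xrightarrow{\phi} \pi_0^{\A^1}(G) \xrightarrow{\delta} \mathcal H^1_{\text{\'et}}(\mu).
\]

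Let $I := \mathrm{im}(\phi) \subseteq \pi_0^{\A^1}(G)$ denote the Nisnevich image, a normal subsheaf of groups by centrality of $\mu$. A short lifting argument, using $\A^1$-invariance of $\pi_0^{\A^1}(\widetilde G)$ (from its strong $\A^1$-invariance) together with $\A^1$-invariance of $\pi_0^{\A^1}(G)$ (from \cite[Theorem 4.18]{Choudhury}), yields that $I$ is $\A^1$-invariant: any section over $\A^1 \times U$ lifts Nisnevich-locally to $\pi_0^{\A^1}(\widetilde G)(\A^1 \times U)$, which is pulled back from $U$, giving the desired lift over $U$. Since $I$ is then an $\A^1$-invariant sheaf-theoretic quotient of the strongly $\A^1$-invariant $\pi_0^{\A^1}(\widetilde G)$, the Choudhury-Hogadi theorem \cite[Theorem 1.3]{Choudhury-Hogadi} forces $I$ to be strongly $\A^1$-invariant.

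To finish, I would identify the cokernel $\pi_0^{\A^1}(G)/I$ with $\ker\bigl(\mathcal H^1_{\text{\'et}}(\mu) \to \mathcal H^1_{\text{\'et}}(\widetilde G)\bigr)$ by extending the fiber sequence one step further to $B_{\text{\'et}}\widetilde G$. Since $\mathcal H^1_{\text{\'et}}(\mu)$ is strictly $\A^1$-invariant (via Kummer and strict $\A^1$-invariance of $\mathbb G_m$) and $\mathcal H^1_{\text{\'et}}(\widetilde G)$ is $\A^1$-invariant by Theorem \ref{theorem EKW}, the cokernel inherits strong $\A^1$-invariance. The short exact sequence
\[
1 \to I \to \pi_0^{\A^1}(G) \to \pi_0^{\A^1}(G)/I \to 1,
\]
with both outer terms strongly $\A^1$-invariant, then yields strong $\A^1$-invariance of the middle term via the long exact sequence in Nisnevich cohomology (centrality of $\mu$ provides the abelian environment in which this extension argument runs cleanly).

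The principal obstacle is verifying strong $\A^1$-invariance of the cokernel $\pi_0^{\A^1}(G)/I$: subsheaves of strongly $\A^1$-invariant sheaves need not inherit the property, so the proof must crucially exploit that this cokernel is cut out inside $\mathcal H^1_{\text{\'et}}(\mu)$ as the kernel of the push-forward to $\mathcal H^1_{\text{\'et}}(\widetilde G)$. This is precisely where Theorem \ref{theorem EKW} enters essentially; it is the ``$\A^1$-homotopical interpretation of classical results on torsors over the affine line'' referred to in the introduction.
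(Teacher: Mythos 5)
Your proposal follows essentially the same route as the paper: the \'etale fiber sequence $\widetilde G \to G \to B_{\text{\'et}}\mu$, Morel's Theorem 6.50 (made applicable by the hypothesis on $\pi_0^{\A^1}(\widetilde G)$ and $\A^1$-locality of $B_{\text{\'et}}\mu$), the identifications $\pi_0^{\A^1}(B_{\text{\'et}}\mu)=\mathcal H^1_{\text{\'et}}(\mu)$ and $\pi_0^{\A^1}(B_{\text{\'et}}\widetilde G)=\mathcal H^1_{\text{\'et}}(\widetilde G)$, Theorem \ref{theorem EKW}, the Choudhury--Hogadi results, and a final extension argument via the long exact sequence. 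So the strategy is the correct one; two steps, however, need tightening.

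First, the ``short lifting argument'' you give for $\A^1$-invariance of $I=\mathrm{im}(\phi)$ does not quite go through as stated: a section over $\A^1\times U$ lifts to $\pi_0^{\A^1}(\widetilde G)$ only after passing to a Nisnevich cover $V\to\A^1\times U$, and $V$ is generally not of the form $\A^1\times V'$ for a cover $V'\to U$, so ``pulled back from $U$'' is not justified. The fix is simpler than a lifting argument: $I$ is a subsheaf of the $\A^1$-invariant $\pi_0^{\A^1}(G)$, so for $\alpha\in I(U\times\A^1)$ the element $\alpha_0:=s_0^*\alpha\in I(U)$ (the zero-section restriction) satisfies $p^*\alpha_0=\alpha$ because $p^*$ is a bijection on $\pi_0^{\A^1}(G)$; this gives $\A^1$-invariance of $I$ outright, and then \cite[Theorem 1.3]{Choudhury-Hogadi} applied to $\pi_0^{\A^1}(\widetilde G)\twoheadrightarrow I$ yields strong $\A^1$-invariance. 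Second, the assertion that ``the cokernel inherits strong $\A^1$-invariance'' from $\mathcal H^1_{\text{\'et}}(\mu)$ being strongly $\A^1$-invariant and $\mathcal H^1_{\text{\'et}}(\widetilde G)$ being merely $\A^1$-invariant is exactly the nontrivial point, and the paper fills it in with a two-step argument you gesture at but do not carry out: the cokernel of $\delta:\pi_0^{\A^1}(G)\to\mathcal H^1_{\text{\'et}}(\mu)$ embeds (via the nonabelian cohomology exact sequence on henselian local schemes) into $\mathcal H^1_{\text{\'et}}(\widetilde G)$ and is therefore $\A^1$-invariant; then \cite[Theorem 1.3]{Choudhury-Hogadi} upgrades it to strongly $\A^1$-invariant as an $\A^1$-invariant quotient of $\mathcal H^1_{\text{\'et}}(\mu)$; finally \cite[Theorem 1.5]{Choudhury-Hogadi} applied to the short exact sequence $1\to\mathrm{im}(\delta)\to\mathcal H^1_{\text{\'et}}(\mu)\to\mathrm{coker}(\delta)\to 1$ gives strong $\A^1$-invariance of $\mathrm{im}(\delta)\cong\pi_0^{\A^1}(G)/I$. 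With these two points supplied, your concluding extension argument is exactly the paper's.
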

\begin{proof}
Note that $\~G \to G$ is an \'etale locally trivial $\mu$-torsor, classified by a map $G \to B\mu$ in the simplicial homotopy category by \cite[Page 130, \textsection 4, Proposition 1.15]{Morel-Voevodsky}.  Thus, there is a simplicial fiber sequence of simplicial sheaves of sets on $(Sm_k)_{\text{\'et}}$ of the form
\[
\~G \to G \to B \mu \to B\~G \to BG. 
\]
More precisely, the map $G \to B \mu$ is simplicially a homotopy principal $\~G$-fibration.  Let $\pi: (Sm_k)_{\text{\'et}} \to (Sm_k)_{\text{Nis}}$ denote the canonical morphism of sites.  The space $B_{\text{\'et}} \mu$ is obtained by applying the right Quillen functor $\pi_*$ to a fibrant replacement $\~{B\mu}$ of $B\mu$ for the \v{C}ech \'etale-local injective model structure.  As $\pi_*$ is a right Quillen functor, it preserves homotopy principal fibrations.  It can then be verified that $\~G$ is weakly equivalent to the fiber of $G \to B_{\text{\'et}} \mu$.  We thus have a simplicial fiber sequence
\[
\~G \to G \to B_{\text{\'et}} \mu.
\]
By \cite[\textsection 4.3, Proposition 3.1]{Morel-Voevodsky}, the space $B_{\text{\'et}} \mu$ is $\A^1$-local.  Since $\pi_0^{\A^1}(\~G)$ is strongly $\A^1$-invariant by hypothesis, by \cite[Theorem 6.50]{Morel-book}, the sequence is an $\A^1$-fiber sequence.  This gives rise to an exact sequence of sheaves of sets
\[
\cdots \to \pi_0^{\A^1}(\~G) \to \pi_0^{\A^1}(G) \to \pi_0^{\A^1}(B_{\text{\'et}} \mu).
\]
For every smooth henselian local $k$-scheme $U$, we have a commutative diagram
\[
\begin{xymatrix}{
G(U) \ar[r] \ar@{->>}[d] & H^1_{\text{\'et}}(U, \mu) \ar[r] \ar[d]^-{\simeq} & H^1_{\text{\'et}}(U, \~G) \ar[r] \ar[d]^-{\simeq} & H^1_{\text{\'et}}(U, G) \\
\pi_0^{\A^1}(G)(U) \ar[r]  & \pi_0^{\A^1}(B_{\text{\'et}}\mu)(U) \ar[r]  & \pi_0^{\A^1}(B_{\text{\'et}}\~G)(U) & } 
\end{xymatrix}
\]
in which the top row is exact.  Note that the cokernel of the map $\pi_0^{\A^1}(G)(U) \to \pi_0^{\A^1}(B_{\text{\'et}}\mu)(U)$ is contained in $\pi_0^{\A^1}(B_{\text{\'et}}\mu)(U) \simeq H^1_{\text{\'et}}(U, \~G)$.  By Theorem \ref{theorem EKW}, the sheaf $\pi_0^{\A^1}(B_{\text{\'et}}\~G) = \mathcal H^1_{\text{\'et}}(\~G)$ is $\A^1$-invariant.  Thus, the cokernel of $\pi_0^{\A^1}(G) \to \pi_0^{\A^1}(B_{\text{\'et}}\mu)$ is an $\A^1$-invariant sheaf.  Note that the sheaf $\pi_0^{\A^1}(B_{\text{\'et}}\mu) = \mathcal H^1_{\text{\'et}}(\mu)$ is strongly $\A^1$-invariant by \cite[Example 4.6]{Asok-crelle}.  Applying \cite[Theorem 1.3]{Choudhury-Hogadi}, we see that $\coker\left(\pi_0^{\A^1}(G) \to \pi_0^{\A^1}(B_{\text{\'et}}\mu)\right)$ is a strongly $\A^1$-invariant sheaf.  Now \cite[Theorem 1.5]{Choudhury-Hogadi} implies that $\im\left(\pi_0^{\A^1}(G) \to \pi_0^{\A^1}(B_{\text{\'et}}\mu)\right)$ is a strongly $\A^1$-invariant sheaf.  Since $\pi_0^{\A^1}(\~G)$ is strongly $\A^1$-invariant, another application of \cite[Theorem 1.3, Theorem 1.5]{Choudhury-Hogadi} yields that $\im\left(\pi_0^{\A^1}(\~G) \to \pi_0^{\A^1}(G)\right)$ is strongly $\A^1$-invariant.  We now conclude from the short exact sequence
\[
1 \to \im\left(\pi_0^{\A^1}(\~G) \to \pi_0^{\A^1}(G)\right) \to \pi_0^{\A^1}(G) \to \im\left(\pi_0^{\A^1}(G) \to \pi_0^{\A^1}(B_{\text{\'et}}\mu)\right) \to 1
\]
that $\pi_0^{\A^1}(G)$ is strongly $\A^1$-invariant.
\end{proof}

We now use the standard structure theory of reductive groups along with Lemma \ref{lemma isogeny} and the results of Section \ref{section simply connected} to obtain our main theorem.

\begin{theorem}
\label{theorem reductive}
If $G$ is a reductive algebraic group over a perfect field $k$, then $\pi_0^{\A^1}(G)$ is strongly $\A^1$-invariant.  
\end{theorem}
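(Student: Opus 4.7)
My plan is to reduce the general reductive case to the semisimple, simply connected case already settled in Corollary \ref{cor ss sc}, by running the argument of the isogeny lemma (Lemma \ref{lemma isogeny}) with a mildly enlarged hypothesis on the source. Let $Z^0 \subseteq G$ denote the connected component of the center, which is a torus because $G$ is reductive, and let $G^{\rm der}$ denote the (semisimple) derived subgroup with simply connected cover $G^{\rm sc} \to G^{\rm der}$. The multiplication morphism
\[
\varphi : \widetilde G := Z^0 \times G^{\rm sc} \longrightarrow G
\]
is then a surjective central isogeny, whose kernel $\mu$ is finite and, being a subgroup scheme of the center of a reductive group, is of multiplicative type.

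I would then carry out the proof of Lemma \ref{lemma isogeny} with $\widetilde G$ and $\mu$ as above. The three inputs that the argument actually uses are: (a) strong $\A^1$-invariance of $\pi_0^{\A^1}(\widetilde G)$; (b) $\A^1$-locality of $B_{\text{\'et}}\mu$; and (c) $\A^1$-invariance of $\mathcal H^1_{\text{\'et}}(\widetilde G) = \pi_0^{\A^1}(B_{\text{\'et}}\widetilde G)$. For (a), one uses the product decomposition $\pi_0^{\A^1}(\widetilde G) \simeq Z^0 \times \pi_0^{\A^1}(G^{\rm sc})$ together with Corollary \ref{cor ss sc} and the standard strong $\A^1$-invariance of tori. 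Input (b) holds for any sheaf of groups of multiplicative type. For (c), the decomposition $\mathcal H^1_{\text{\'et}}(\widetilde G) \simeq \mathcal H^1_{\text{\'et}}(Z^0) \times \mathcal H^1_{\text{\'et}}(G^{\rm sc})$ reduces matters to Theorem \ref{theorem EKW} on the $G^{\rm sc}$-factor and, on the $Z^0$-factor, to the classical homotopy invariance of étale cohomology with torus coefficients on smooth schemes (which in turn comes from the homotopy invariance of $\Pic$ via a resolution of $Z^0$ by quasi-trivial tori).

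With these inputs in place, the argument of Lemma \ref{lemma isogeny} goes through verbatim: the étale torsor $\widetilde G \to G$ yields an $\A^1$-fiber sequence $\widetilde G \to G \to B_{\text{\'et}}\mu$, and successive applications of \cite[Theorem 1.3, Theorem 1.5]{Choudhury-Hogadi} show, in order, that $\coker\bigl(\pi_0^{\A^1}(G) \to \pi_0^{\A^1}(B_{\text{\'et}}\mu)\bigr)$ is strongly $\A^1$-invariant, then that both images $\im\bigl(\pi_0^{\A^1}(G) \to \pi_0^{\A^1}(B_{\text{\'et}}\mu)\bigr)$ and $\im\bigl(\pi_0^{\A^1}(\widetilde G) \to \pi_0^{\A^1}(G)\bigr)$ are strongly $\A^1$-invariant, and finally that $\pi_0^{\A^1}(G)$ itself, sitting in a short exact sequence with strongly $\A^1$-invariant outer terms, is strongly $\A^1$-invariant. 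The only conceptual hurdle is the observation that Lemma \ref{lemma isogeny} is insensitive to whether $\widetilde G$ is semisimple, so long as (a) and (c) hold; for a product of a semisimple simply connected group and a torus, both do.
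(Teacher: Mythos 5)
Your proposal is correct and rests on the same pillars as the paper's proof (Corollary \ref{cor ss sc}, Lemma \ref{lemma isogeny}, Theorem \ref{theorem EKW}, and the Choudhury--Hogadi theorems), but the reduction to the simply connected case is organized differently. The paper applies Lemma \ref{lemma isogeny} twice: first to the isogeny $G_{\rm sc} \to G_{\rm der}$ (which is squarely within the lemma's stated scope, since both groups are semisimple and the source is simply connected), and then to the central isogeny $G_{\rm der} \times T \to G$ with $T$ the radical torus. You instead run the argument of Lemma \ref{lemma isogeny} once, for the central isogeny $Z^0 \times G^{\rm sc} \to G$ obtained by composing with the simply connected cover. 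Both routes work, but yours has a genuine advantage: the second application of Lemma \ref{lemma isogeny} in the paper has a source $G_{\rm der}\times T$ that is neither semisimple (because of $T$) nor has simply connected semisimple part (since $G_{\rm der}$ need not be simply connected), so invoking Theorem \ref{theorem EKW}, which requires simple connectedness, for $\mathcal H^1_{\text{\'et}}(G_{\rm der})$ is not immediate; one would in fact need to replace $G_{\rm der}\times T$ by $G_{\rm sc}\times T$, which is exactly your $\widetilde G$. You make this precise by isolating the three inputs (a), (b), (c) that the argument of Lemma \ref{lemma isogeny} really uses, and then verifying them for $\widetilde G = Z^0 \times G^{\rm sc}$ and its kernel $\mu$. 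In particular, you correctly observe that input (c) for the torus factor reduces to homotopy invariance of \'etale $H^1$ with torus coefficients (e.g.\ via a quasi-trivial resolution and homotopy invariance of $\Pic$), which the paper leaves implicit. In short: same ideas, one fewer application of the isogeny lemma, and a cleaner treatment of why the lemma applies beyond its literal semisimple hypothesis.
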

\begin{proof}
Let $G_{\rm der}$ denote the derived group of $G$, which is a normal, semisimple subgroup scheme of $G$.  Let $G_{\rm sc}$ denote the universal simply connected cover of $G_{\rm der}$.  By Corollary \ref{cor ss sc}, the sheaf $\pi_0^{\A^1}(G_{\rm sc})$ is strongly $\A^1$-invariant.  Lemma \ref{lemma isogeny} then implies that $\pi_0^{\A^1}(G_{\rm der})$ is strongly $\A^1$-invariant.

There exists a central isogeny (see \cite[Expos\'e XXII, Proposition 6.2.4]{SGA3.3}) $G_{\rm der} \times T \to G$, where $T$ is a torus, the radical of $G$.  This is a faithfully flat, finitely presented morphism, whose kernel is a finite group of multiplicative type contained in the center of $G_{\rm der} \times T$.  It follows by another application of Lemma \ref{lemma isogeny} that $\pi_0^{\A^1}(G)$ is strongly $\A^1$-invariant, since $\pi_0^{\A^1}(G_{\rm der} \times T) \simeq \pi_0^{\A^1}(G_{\rm der}) \times T$ is strongly $\A^1$-invariant.
\end{proof}

A straightforward consequence of Theorem \ref{theorem reductive} and \cite[Theorem 6.50]{Morel-book} is the following application to $\A^1$-fiber sequences.

\begin{corollary}
\label{cor a1 fiber sequence}
Let $G$ be a reductive algebraic group over a perfect field $k$.  For any Nisnevich locally trivial $G$-torsor $\mathcal P \to \mathcal X$ in $\mathcal H(k)$, 
\[
\mathcal P \to \mathcal X \to BG 
\]
is an $\A^1$-fiber sequence.
\end{corollary}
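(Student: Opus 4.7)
The plan is to show this is a direct consequence of Theorem \ref{theorem reductive} combined with Morel's general criterion for torsors to produce $\A^1$-fiber sequences, already cited in the introduction as \cite[Theorem 6.50]{Morel-book}. That criterion says: if $G$ is a Nisnevich sheaf of groups on $Sm_k$ such that $\pi_0^{\A^1}(G)$ is strongly $\A^1$-invariant, then every Nisnevich locally trivial $G$-torsor $\mathcal{P}\to\mathcal{X}$ in $\mathcal H(k)$ fits into an $\A^1$-fiber sequence $\mathcal{P}\to\mathcal{X}\to BG$. So the only thing I need to do is check the hypothesis of this criterion for a reductive group $G$ over a perfect field $k$.

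First, I would invoke Theorem \ref{theorem reductive}, which provides exactly the required statement: for $G$ reductive over a perfect field $k$, the sheaf $\pi_0^{\A^1}(G)$ is strongly $\A^1$-invariant. Second, I would apply Morel's theorem to the given $G$-torsor $\mathcal{P}\to\mathcal{X}$ to conclude that $\mathcal{P}\to\mathcal{X}\to BG$ is an $\A^1$-fiber sequence. That finishes the proof.

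There is essentially no obstacle here: all the substantive work has been done in the previous sections, culminating in Theorem \ref{theorem reductive}. The only thing one might want to double-check is that the sheaf $G$ represented by the reductive algebraic group satisfies the hypotheses of \cite[Theorem 6.50]{Morel-book} as a Nisnevich sheaf of groups, but this is immediate since $G$ is represented by a smooth $k$-scheme and hence defines a Nisnevich sheaf of groups on $Sm_k$ in the usual way. Thus the corollary is a one-line consequence of the preceding theorem.
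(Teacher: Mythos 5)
Your proof is correct and matches the paper's own reasoning: the corollary is stated as a straightforward consequence of Theorem \ref{theorem reductive} together with Morel's criterion \cite[Theorem 6.50]{Morel-book}, exactly as you argue. No further commentary is needed.
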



\begin{thebibliography}{9999}

%
\bibitem{Asok-crelle}
A. Asok:
\emph{Birational invariants and $\A^1$-connectedness},
J. Reine Angew. Math. 681 (2013), 39--64.

\bibitem{Asok-2013-JTop}
A. Asok: 
\emph{Splitting vector bundles and $\A^1$-fundamental groups of higher-dimensional varieties}, 
J. Topol. 6 (2013), no. 2, 311--348.

\bibitem{Asok-Fasel-3folds}
A. Asok, J. Fasel:
\emph{A cohomological classification of vector bundles on smooth affine threefolds}, 
Duke Math. J. 163 (2014), 14, 2651--2601.

\bibitem{Asok-Fasel-4folds}
A. Asok, J. Fasel:
\emph{Splitting vector bundles outside the stable range and $\A^1$-homotopy sheaves of punctured
affine spaces},
J. Amer. Math. Soc. 28 (2015), no. 4, 1031--1062.

\bibitem{AHW2}
A. Asok, M. Hoyois, M. Wendt:
\emph{Affine representability results in $\A^1$-homotopy theory, II: Principal bundles and homogeneous spaces},
Geom. Topol. 22 (2018), no. 2, 1181--1225.

\bibitem{AHW3}
A. Asok, M. Hoyois, M. Wendt:
\emph{Affine representability results in $\A^1$-homotopy theory, III: Finite fields and complements},
Algebr. Geom. 7 (2020), no. 5, 634--644.


%
\bibitem{Asok-Morel}
A. Asok, F. Morel:
\emph{Smooth varieties up to $\A^1$-homotopy and algebraic $h$-cobordisms},
Adv. Math. 227(2011), no. 5, 1990--2058.

\bibitem{Ayoub-counterexample}
J. Ayoub:
\emph{Counterexamples to F. Morel's conjecture on $\pi_0^{\A^1}$},
Preprint, available at \url{http://user.math.uzh.ch/ayoub/PDF-Files/pi0-A1.pdf}.
%



\bibitem{Balwe-Sawant-R-eqivalence-IMRN}
C. Balwe and A. Sawant:
\emph{$R$-equivalence and $\A^1$-connectedness in anisotropic groups},
Int. Math. Res. Not. IMRN 2015, No. 22, 11816--11827.

\bibitem{Balwe-Sawant-reductive}
C. Balwe, A. Sawant:
\emph{$\mathbb A^1$-connectedness in reductive algebraic groups},
Trans. Amer. Math. Soc. 369 (2017), no. 8, 5999--6015.


\bibitem{Borel-Springer-II}
A. Borel, T, A. Springer: \emph{Rationality properties of linear algebraic groups. II}, 
Tohoku Math. J. 20 (1968), 443--497.

\bibitem{Borel-Tits}
A. Borel and J. Tits,
\emph{Groupes r\'eductifs},
Inst. Hautes \'Etudes Sci. Publ. Math. No. 27 (1965), 55--150.

\bibitem{Choudhury}
U. Choudhury:
\emph{Connectivity of motivic H-spaces},
Algebr. Geom. Topol. 14 (2014), no. 1, 37--55.

\bibitem{Choudhury-Hogadi}
U. Choudhury, A. Hogadi:
\emph{The Hurewicz map in motivic homotopy theory},
Annals of $K$-theory, Vol. 7 (2022), No. 1, 179--190.

\bibitem{CTHHKPS}
J.-L. Colliot-Th\'el\`ene, D. Harbater, J. Hartmann, D. Krashen, R. Parimala, V. Suresh:
\emph{Local-global principles for constant reductive groups over semi-global fields},
Preprint: arXiv:2108.12349 [math.AG].


%
%
%

\bibitem{Conrad-Gabber-Prasad}
B. Conrad, O. Gabber and G. Prasad:
\emph{Pseudo-reductive groups},
New Mathematical Monographs, 26,
Cambridge University Press, Cambridge, 2015.

\bibitem{EKW}
E. Elmanto, G. Kulkarni, M. Wendt:
\emph{$\A^1$-connected components of classifying spaces and purity for torsors},
Preprint: arXiv:2104.06273 [math.AG].


%

\bibitem{Fedorov-Panin}
R. Fedorov, I. A. Panin:
\emph{A proof of the Grothendieck-Serre conjecture on principal bundles over regular local rings containing infinite fields},
Publ. Math. Inst. Hautes \'Etudes Sci. 122 (2015), 169--193.

%

\bibitem{GGMB}
O. Gabber, P. Gille, L. Moret-Bailly:
\emph{Fibr\'es principaux sur les corps valu\'es hens\'eliens}, Algebr. Geom. 1 (2014), no. 5, 573--612.

\bibitem{Gille}
P. Gille:
\emph{Le probl\`eme de Kneser-Tits},
S\'eminaire Bourbaki. Vol. 2007/2008.
Ast\'erisque No. 326 (2009), Exp. No. 983, vii, 39--81 (2010).

\bibitem{Gille-owr}
P. Gille:
\emph{Gabber's compactifications of algebraic groups and homogeneous spaces},
Oberwolfach reports, 31/2013.


\bibitem{Gille-Stavrova}
P. Gille, A. Stavrova:
\emph{$R$-equivalence on reductive group schemes},
Preprint, arXiv:2107.01950v3 [math.AG].

%
%
%

\bibitem{Harder-1967}
G. Harder:
\emph{Halbeinfache Gruppenschemata \"uber Dedekindringen}, 
Invent. Math. 4 (1967), 165--191.

\bibitem{Kambayashi-Miyanishi-Takeuchi}
T. Kambayashi, M. Miyanishi, M. Takeuchi:
\emph{Unipotent algebraic groups},
Lecture Notes in Mathematics, Vol. 414, Springer, 1974.

%
%
\bibitem{Matsumura-CommAlg}
H. Matsumura:
\emph{Commutative algebra},
Second edition. Mathematics Lecture Note Series, 56. Benjamin/Cummings Publishing Co., Inc., Reading, Mass., 1980.

%
\bibitem{Milne-algebraic groups}
J.S. Milne:
\emph{Algebraic groups. The theory of group schemes of finite type over a field}, 
Cambridge Studies in Advanced Mathematics, 170, 
Cambridge University Press, 2017. 

%

\bibitem{Morel-book}
F. Morel: 
\emph{$\mathbb A^1$-algebraic topology over a field},
Lecture Notes in Mathematics, Vol. 2052, Springer, Heidelberg, 2012.
 

\bibitem{Morel-Voevodsky}
F. Morel, V. Voevodsky:
\emph{$\A^1$-homotopy theory of schemes},
Inst. Hautes \'Etudes Sci. Publ. Math. 90 (1999), 45--143.
%
%

\bibitem{Panin}
I.A. Panin:
\emph{Proof of the Grothendieck-Serre conjecture on principal bundles over regular local rings containing a field}, 
Izv. Ross. Akad. Nauk Ser. Mat. 84 (2020), no. 4, 169--186.

\bibitem{Raghunathan-Ramanathan}
M.S. Raghunathan, A. Ramanathan:
\emph{Principal bundles on the affine line}, 
Proc. Indian Acad. Sci. (Math. Sci.) 93, No. 2 \& 3 (1984), 137--145.


%
%
\bibitem{Steinberg}
R. Steinberg:
\emph{Regular elements of semisimple algebraic groups}, 
Inst. Hautes \'Etudes Sci. Publ. Math. No.
25 (1965), 49--80.

%
\bibitem{Tits-classification}
J. Tits:
\emph{Classification of algebraic semisimple groups}, 
Algebraic Groups and Discontinuous Subgroups (Proc. Sympos. Pure Math., Boulder, Colo., 1965), Amer. Math. Soc. (1966), 33--62.



\bibitem{Wendt}
M. Wendt: 
\emph{$\A^1$-homotopy of Chevalley groups}, J. K-Theory 5 (2) (2010), 245--287.

\bibitem{SGA3.3}
\emph{Sch\'emas en groupes. III: Structure des sch\'emas en groupes r\'eductifs}, S\'eminaire de G\'eom\'etrie Alg\'ebrique du Bois Marie 1962/64 (SGA 3). Dirig\'e par M. Demazure et A. Grothendieck. Lecture Notes in Mathematics, Vol. 153, Springer-Verlag, 1970.

\end{thebibliography}
\end{document}